\documentclass{article}
\usepackage{amsmath,amssymb,amsfonts,amsthm}
\usepackage{geometry}
\usepackage{enumitem}
\usepackage{url}
\usepackage{comment}
\newtheorem{proposition}{Proposition}
\newtheorem{theorem}{Theorem}[section]
\newtheorem{definition}[theorem]{Definition}
\newtheorem{lemma}[theorem]{Lemma}

\newtheorem{Example}[theorem]{Example}

\title{\textbf{Haar Measure on Fuzzy Lie Group}}
\author{
     S. S. Sangodele, M. E. Egwe$^*$ \\
    Department of Mathematics, University of Ibadan, Ibadan, Nigeria \\
    \texttt{Email:murphy.egwe@ui.edu.ng$^*$, ss.sangodele@ui.edu.ng}
}
\date\today

\begin{document}

\maketitle

\begin{abstract}
Let $\mathfrak{G}$ be a locally compact Lie group and \(d\mu\) a Haar measure defined on $\mathfrak{G}$. We consider the existence of a fuzzy analogue of $\mathfrak{G}$ denoted by $\mathfrak{G_f}$  called a fuzzy Lie group and develop a fuzzy Haar measure \(\mu_f\) on $\mathfrak{G_f}$. Also we construct a fuzzy Haar integral with respect to the corresponding fuzzy Haar measure on $\mathfrak{G_f}$. And finally we show that there exists a fuzzy Haar integral that is unique up to a multiplicative constant.\\
\ \\
\textbf{Key words:} Haar measure,  Fuzzy Lie group, Fuzzy manifolds\\
\textbf{MSC(2020):} , 46S40, 54A40, 20N25, 43A90, 28C10\\
\end{abstract}

\section{Introduction}
The fuzzy measure theory was first introduced by Sugeno in [9],[10], where the classical measure theory was generalised. This resulted in the creation of fuzzy measure theory  by considering the fuzzy analogue of measure theory of classical set theory. In this case, the results were obtained by introducing monotonicity and semicontinuity to replace the additivity requirement of classical measure theory. Measurable functions defined on a fuzzy measure space were introduced by Zhenyuan Wang in [12]. Fuzzy Lie algebra recently took a centre stage when Akram [1] introduced his book in 2018. The exponential map of any fuzzy Lie algebra into a fuzzy Lie group remains an open problem. The approach in this work is the manifold property of fuzzy Lie group.  The work of Alfred Haar in 1933 gave birth to the concept of Haar measure by establishing an invariant measure property on the action of a topological group. In [2] Alfsen constructed the proof of existence and uniqueness of Haar measure on Lie groups. Our aim in this series is to establish some results on integration and integral operators on fuzzy Lie groups with an underlying measure of integration.  In  this particular paper, we develop  a fuzzy Haar measure by constructing a fuzzy analogue of Haar measure where the properties of monotonicity, continuity and invariance are profusely required. Finally we shall introduce and discuss extensively the concept of fuzzy Haar integral and show its existence up to a multiplicative constant. The main results are Theorems 3.23, 8.3, 9.1 and 9.4.

\section{Fuzzy Set}
\subsection*{}

\begin{definition} [Fuzzy Set] [7] \\
    Given a set $X$. A fuzzy set $\mathfrak{A}$ in $X$ is defined as a set of ordered pairs:
    \[
    \mathfrak{A} = \{(x, \quad \mathfrak{P}_{\mathfrak{A}} (x)) \quad| \quad x \in X\}
    \]
    where
    \[
    \mathfrak{P}_{\mathfrak{A}} : X \rightarrow [0,1].
    \]
    such that,
    \[
    \mathfrak{P}_{\mathfrak{A}}(x) =
    \begin{cases}
        1, & if \ x \in \mathfrak{A} \\
        (0,1), & if \ x \ \text{is partially in} \ \mathfrak{A} \\
        0, & if \ x \notin \mathfrak{A}
    \end{cases}
    \]
    And $\mathfrak{P}_{\mathfrak{A}} (x)$ denotes the membership degree of elements in $X$.
\end{definition}

\begin{definition}[Fuzzy subset][1]\\
Let \(\{U_i\}_{i\in I}\) be a collection of fuzzy subsets of \(X\). Then we define the fuzzy subsets \(\displaystyle\bigcap_{i\in I}U_i\) and \(\displaystyle\bigcup_{i\in I}U_i\) by
\[
\left(\displaystyle\bigcap_{i\in I}U_i\right)(x) = \inf_{i\in I}\{U_i(x)\} \quad \text{for all } x\in X.
\]
\[
\left(\displaystyle\bigcup_{i\in I}U_i\right)(x) = \sup_{i\in I}\{U_i(x)\} \quad \text{for all } x\in X.
\]
\end{definition}

\begin{definition}[t-cut set][1]\\
Let \(U\) be a fuzzy set on a universe of discourse \(X\). For any \(t\in [0,1]\), a \(t\)-cut set (t-level set) of \(U\) is defined as
\[
U_t = \{x\in X \mid U(x)\ge t\}.
\]
The set
\[
U_t^+ = \{x\in X \mid U(x) > t\}
\]
is called the strong \(t\)-cut set of \(U\).
\end{definition}

\begin{definition}[Fuzzy relation][1]\\
A fuzzy relation on a nonempty set \(X\) is a fuzzy operation \(\gamma: X\times X \to [0,1]\). Let \(U\) be a fuzzy set on \(X\) and \(\gamma\) a fuzzy relation on \(X\). We call \(\gamma\) a fuzzy relation on \(U\) if
\[
\gamma(x,y) \le \min\{U(x),U(y)\} \quad \text{for all } x,y\in X.
\]
\end{definition}

\begin{definition}[][1]\\
Let \(\gamma\) be a fuzzy relation on a universe \(X\). For any \(t\in [0,1]\), a \(t\)-cut set (t-level set) of \(\gamma\) is denoted by
\[
U(\gamma,t) = \{(x,y)\in X\times X \mid \gamma(x,y)\ge t\}.
\]
\end{definition}

\begin{definition}[Fuzzy equivalence relation][1]\\
Let \(\gamma\) be a fuzzy relation on \(X\). Then \(\gamma\) is called a fuzzy equivalence relation on \(X\) if the following conditions are satisfied:
\begin{enumerate}
\item[(a)] \(\gamma\) is fuzzy reflexive, i.e. \(\gamma(x,x)=0\) for each \(x\in X\);
\item[(b)] \(\gamma\) is fuzzy symmetric, i.e. \(\gamma(x,y)=\gamma(y,x)\) for each \(x,y\in X\);
\item[(c)] \(\gamma\) is fuzzy transitive, i.e. \(\gamma(x,z)\ge (\gamma(x,y)\wedge \gamma(y,z))\).
\end{enumerate}
\end{definition}

\begin{definition}(Fuzzy function)
\\
Let \(f\) be a mapping defined on a set \(X\). If \(U\) is a fuzzy set in \(X\), then the fuzzy set \(V\) in \(f(X)\) defined by
\[
V(y) = \sup_{x\in f^{-1}(y)} U(x) \quad \text{for all } y\in f(X)
\]
is called the image of \(U\) under \(f\). If \(V\) is a fuzzy set on \(f(X)\), then the fuzzy set \(U = V\circ f\) in \(X\), i.e. the fuzzy set defined by \(U(x)=V(f(x))\) for all \(x\in X\), is called the pre-image of \(V\) under \(f\).
\end{definition}

\begin{definition} (Fuzzy Inverse function)\\
Let \(f\) be a mapping from a set \(X\) into a set \(Y\).
\begin{enumerate}
\item[(1)] Let \(U\) be a fuzzy set in \(Y\). The inverse image of \(U\), denoted by \(f^{-1}(U)\), is the fuzzy set in \(X\) defined by \(f^{-1}(U)(x)=U(f(x))\) for all \(x\in X\).
\item[(2)] Let \(V\) be a fuzzy set in \(X\). The image of \(V\) is denoted by \(f(V)\) and is the fuzzy set in \(Y\) such that
\[
f(V)(y) = \sup_{x\in f^{-1}(y)} V(x), \quad if \ f^{-1}(y) \neq \emptyset, \quad \text{for all } y\in Y.
\]
\end{enumerate}
\end{definition}

\subsection{Fuzzy Vector Spaces}

\begin{definition}[][3]\\
Let \(E\) be a vector space and \(\mathfrak{A}_1,\dots,\mathfrak{A}_n\) be fuzzy sets in \(E\). We define \(\mathfrak{A}_1\times\cdots\times \mathfrak{A}_n\) to be the fuzzy set \(\mathfrak{A}\) in \(E^n\) whose membership function is given by
\[
\nu_\mathfrak{A} (x_1,\dots,x_n) = \min\{\nu_{\mathfrak{A}_1}(x_1),\dots,\nu_{\mathfrak{A}_n}(x_n)\}.
\]
Let \(f:E^{n} \to E\) be defined by \(f(x_1,\dots ,x_n)=x_1+\dots +x_n\). We define  \(\mathfrak{A}_1+\dots+\mathfrak{A}_n = f(\mathfrak{A})\). For a scalar \(\beta\) and a fuzzy set \(\mathfrak{B}\) in \(E\), we define \(\beta \mathfrak{B} = g(\mathfrak{B})\) where \(g:E\to E\) is given by \(g(x)=  x\).
\end{definition}
The following proposition gives the properties of fuzzy linear map as can be seen in
\begin{proposition}[][7]\\
Let \(E,F\) be vector spaces over \(\mathbb{K}\) and let \(f\) be a linear map from \(E\) to \(F\). Then for all fuzzy sets \(\mathfrak{A},\mathfrak{B}\) in \(E\) and all scalars \(\beta\):
\begin{enumerate}
\item[(a)] \(f(\mathfrak{A}+\mathfrak{B}) = f(\mathfrak{A}) + f(\mathfrak{B})\);
\item[(b)] \(f(\beta \mathfrak{A}) = \beta f(\mathfrak{A})\).
\end{enumerate}
\end{proposition}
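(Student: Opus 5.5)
The plan is to unwind both sides through the definition of the image of a fuzzy set (the sup-formula in the Fuzzy Function / Fuzzy Inverse Function definitions) and compare the resulting suprema pointwise. I read $\beta\mathfrak{B}$ as the image of $\mathfrak{B}$ under $x\mapsto\beta x$; the displayed $g(x)=x$ is presumably a typo. Throughout I use the convention that a supremum over the empty set is $0$.

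\textbf{Part (a).} For $y\in F$, the definition gives
\[
f(\mathfrak{A}+\mathfrak{B})(y)=\sup_{z\in f^{-1}(y)}\ \sup_{x_1+x_2=z}\min\{\nu_\mathfrak{A}(x_1),\nu_\mathfrak{B}(x_2)\}
=\sup\{\min\{\nu_\mathfrak{A}(x_1),\nu_\mathfrak{B}(x_2)\} : f(x_1+x_2)=y\}.
\]
On the other side,
\[
(f(\mathfrak{A})+f(\mathfrak{B}))(y)=\sup_{y_1+y_2=y}\min\Big\{\sup_{f(x_1)=y_1}\nu_\mathfrak{A}(x_1),\ \sup_{f(x_2)=y_2}\nu_\mathfrak{B}(x_2)\Big\}.
\]
The key step is to pull the inner suprema outside the $\min$. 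This uses the elementary identity $\min\{\sup_i a_i,\sup_j b_j\}=\sup_{i,j}\min\{a_i,b_j\}$, which holds because $\min$ is monotone and continuous in each variable on $[0,1]$. The right-hand side then becomes the supremum of $\min\{\nu_\mathfrak{A}(x_1),\nu_\mathfrak{B}(x_2)\}$ over all pairs with $f(x_1)+f(x_2)=y$. By linearity, $f(x_1)+f(x_2)=f(x_1+x_2)$, so the two index sets coincide and the two suprema are equal.

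The edge cases need a check. If $y\notin f(E)$, both index sets are empty and both sides are $0$. If $y\in f(E)$ but one of the sets $f^{-1}(y_i)$ is empty, that term contributes $0$, which is consistent with the sup-over-empty convention.

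\textbf{Part (b).} The argument is analogous:
\[
f(\beta\mathfrak{A})(y)=\sup\{\nu_\mathfrak{A}(x): f(\beta x)=y\}=\sup\{\nu_\mathfrak{A}(x):\beta f(x)=y\}=(\beta f(\mathfrak{A}))(y),
\]
where the middle equality is linearity, $f(\beta x)=\beta f(x)$. For $\beta\neq 0$ there is nothing more to check. For $\beta=0$ both sides are $\sup_{x\in E}\nu_\mathfrak{A}(x)$ at $y=0$ and $0$ elsewhere, so they agree.

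The main obstacle is the sup–min interchange in (a) and keeping the empty-preimage conventions consistent. Everything else is direct substitution.
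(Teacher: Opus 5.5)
The paper gives no proof of this proposition: it is quoted from Katsaras--Liu [7], with the typo $g(x)=x$ where $g(x)=\beta x$ is meant in the definition of $\beta\mathfrak{B}$, which you correctly repaired. Your direct pointwise verification is correct and complete. The interchange $\min\{\sup_i a_i,\sup_j b_j\}=\sup_{i,j}\min\{a_i,b_j\}$ holds, including the degenerate cases with an empty index set, because membership values are nonnegative, and linearity then identifies the two index sets in both (a) and (b). The separate check for $\beta=0$ is harmless but redundant, since your computation in (b) never divides by $\beta$.
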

\subsection{Topology of Fuzzy Sets}

Let \(X\) be a non-empty set and let \(I=[0,1]\). \(I_X\) will denote the set of all functions \(\gamma: X\to I\). A member of \(I_X\) is called a fuzzy subset of \(X\).

\begin{definition}[Fuzzy topological space][3]\\
Let \(\gamma\) be a fuzzy subset of \(X\). A collection \(\tau\) of fuzzy subsets of \(\gamma\) satisfying
\begin{enumerate}
\item[(i)] \(k\cap\gamma\in\tau\) for all \(k\in I\);
\item[(ii)] \(U_i\in\tau\) for all \(i\in \mathbb{N}\) \(\Rightarrow \bigcup\{U_i: i\in\mathbb{N}\}\in\tau\);
\item[(iii)] \(U,V\in\tau\) \(\Rightarrow U\cap V\in\tau\).
\end{enumerate}
\end{definition}

\noindent
is called a fuzzy topology on \(\gamma\). The pair \(\langle \gamma, \tau \rangle\) is called a fuzzy topological space. Members of \(\tau\) will be called fuzzy open sets and their complements with respect to \(\gamma\) are called fuzzy closed sets of \(\langle \gamma, \tau \rangle\).

If \(\mathcal{B}\) be a given collection of fuzzy subset of \(\gamma\), then the family of all possible unions and finite intersections of the members of \(\mathcal{B}\) and the family \(\{\gamma \cap k: k \in I\}\) is a fuzzy topology on \(\gamma\) and it will be denoted by \(\tau(\mathcal{B})\).

\begin{definition}[Open base][3]\\
\(\mathcal{B} \in \tau\) is called an open base of \(\tau\) if every member of \(\tau\) can be expressed as the union of some members of \(\mathcal{B}\).
\end{definition}

\begin{definition}[Fuzzy hausdorff space][3]\\
A fuzzy topological space \(\langle \gamma, \tau \rangle\) is said to be fuzzy Hausdorff space if \(\forall \ X_p, Y_p \in \gamma(X \times Y)\), \(\exists \ U, V \in \tau\) such that \(X_p \in U, \ Y_p \in V\) and \(U \cap V = \emptyset\).
\end{definition}

\begin{definition}[Fuzzy compact space][3]\\
A fuzzy topological space \(\langle \gamma, \tau \rangle\) is said to be fuzzy compact if \(\forall  \ \beta \in \tau\) satisfying \(\bigcup\{U: U \in \beta\} = \gamma\) and \(\forall \ \varepsilon > 0\), \(\exists\) a finite subcollection \(\beta_0\) of \(\beta\) such that \(\bigcup\{U: U \in \beta_0\} \ge \gamma_\varepsilon\) where \(\gamma_\varepsilon\) is defined by \(\gamma_\varepsilon (x) = g(x) - \varepsilon\) or $0$ according as \(\gamma(x) > \varepsilon\) or \(\gamma(x) \le \varepsilon\).
\end{definition}

\begin{definition}[Fuzzy separability][3]\\
A fuzzy subset \(U\) of \(\gamma\) is said to be fuzzy separated. If \(\exists \ \nu, \delta \in \tau\) such that \(U = \nu \cup \delta\), \(\nu \neq \delta\) and \(\nu \cap \delta = \emptyset\).
\end{definition}

\begin{definition}[Fuzzy connectedness][3]\\
A fuzzy topological space \(\langle \gamma, \tau \rangle\) is said to be fuzzy connected if no fuzzy closed subset of \(\langle \gamma, \tau \rangle\) can be fuzzy separated.
\end{definition}

Let $X$ and $Y$ be a non-empty sets and $\gamma \in I_X, \ U \in I_Y$
\begin{definition}[Fuzzy proper function][3]\\
A fuzzy subset \(Q\) of \(X\times Y\) is said to be a fuzzy proper function from \(\gamma\) to \(U\) if
\begin{enumerate}
    \item $Q\langle x,y \rangle$ $\le \min\{\gamma\langle x \rangle,U\langle y \rangle\} \quad \forall \ \langle x,y\rangle \in X\times Y$,
    \item $\forall \ x\in X,\; \exists \ y_0\in Y \text{ such that } \ Q\langle x,y_0 \rangle=\gamma(x) \ \text{ and } \ Q\langle x,y \rangle=0 \text{ if } \ y\neq y_0$.
\end{enumerate}
\end{definition}

\begin{definition}[Fuzzy proper inverse function][9]\\
Let \(Q:\gamma \to U\) be a proper function. If \(A\le\gamma\), \(B\le U\) then \(Q(A)\) and \(Q^{-1}(B)\) are defined by
\[
\langle Q\langle A \rangle \rangle \langle y \rangle = \sup\{\min\{Q \langle x,y \rangle, \ A \langle x \rangle \}: x\in X\} \quad \forall \ y\in Y,
\]
\[
\langle Q^{-1} \langle B \rangle \rangle \langle x \rangle = \sup\{\min\{Q \langle x,y \rangle, \ B \langle y \rangle \}: y\in Y\} \quad \forall \ x\in X.
\]
\end{definition}

\begin{definition}[Fuzzy bijective function][9]\\
A proper function \(F:\gamma\to U\) is said to be
\begin{enumerate}
\item[(i)] injective if \( Q \langle x_1,y \rangle =\gamma(x_1)\) and \(Q\langle x_2,y \rangle=\gamma(x_2)\) implies \(x_1=x_2\) for all \ \(x_1,x_2\in X\), \(y\in Y\);
\item[(ii)] surjective if \(\forall \ y\in Y\) with \(U(y)\neq0\), \(\exists \ x\in X\) such that \(Q \langle x,y \rangle=\gamma(x)\);
\item[(iii)] bijective if \(Q\) is both injective and surjective.
\end{enumerate}
\end{definition}

\begin{definition}(Fuzzy composition)\\
If \(Q:\gamma\to U\) and \(G:U\to V\) (where \(V\in I_Z\)) are proper functions, then the proper function \(G \circ Q:\gamma\to V\) is defined by
\[
(G\circ Q)(x,z) =
\begin{cases}
\gamma(x) & \text{if } \ \exists \ y\in Y \text{ such that } \ Q(x,y)=\gamma(x) \text{ and } \ G(y,z)=U(y),\\
0 & \text{otherwise}.
\end{cases}
\]
\end{definition}

\begin{definition}(Fuzzy composition of inverse function)\\
\(G:U\to\gamma\) is called an inverse of \(Q:\gamma\to U\) if \(G\circ Q = I_\gamma\) and \(Q\circ G = I_U\).
\end{definition}

\begin{definition}[Fuzzy homeomorphism][3]\\
A proper function \(Q:(\gamma,\tau)\to (U,\tau_1)\) is said to be
\begin{enumerate}
\item[(i)] fuzzy continuous if \(Q^{-1}(U)\in\tau\) for all \(U\in\tau_1\);
\item[(ii)] fuzzy open if \(Q(\delta)\in\tau_1\) for all \(\delta\in\tau\);
\item[(iii)] fuzzy homeomorphism if \(Q\) is bijective, fuzzy continuous and open.
\end{enumerate}
\end{definition}
\section{Fuzzy Measure Space}
\begin{definition}[Power Set] [13]\\
    The class of the subsets of $X$ is called the power set of $X$, denoted by $\mathcal{P}(X)$
\end{definition}
\begin{definition}[Ring] [13]\\
    The non-empty class $\mathfrak{R}$ is called a ring iff, $\forall \ M, N \in \mathfrak{R}$
    $$
    M \cup N \in \mathfrak{R} \ \text{and} \ M-N\in \mathfrak{R}
    $$
\end{definition}
\begin{definition}[Algebra] [13]\\
    A nonempty class $\mathfrak{R}$ is called an algebra iff, $\forall \ M,N \in \mathfrak{R}$
    $$
    M \cup N \in \mathfrak{R} \ \text{and} \ \overline{M} \in \mathfrak{R}
    $$
\end{definition}
\begin{definition}[Semiring] [13]\\
    A nonempty class $\mathcal{J}$ is called a semiring iff,
    \begin{enumerate}
        \item for all $M, N \in \mathcal{J}$, $M \cap N \in \mathcal{J}$.
        \item for all $M, N \in \mathcal{J}$ satisfying $M \subset N$, there exist  a finite class $\{A_0, A_1, \cdots ,A_n\}$ of sets in $\mathcal{J}$, such that
        $$
        M=A_0, \subset A_1 \subset \cdots A_n = N \\
        \text{and} \ D_i = C_i - C_{i-1} \subset \mathcal{J} \ \text{for all} \ i=1,2, \cdots, n
        $$
    \end{enumerate}
\end{definition}
\begin{definition}[$\sigma$-ring] [13]\\
    A nonempty class $F$ is called a $\sigma$-ring iff,
    \begin{enumerate}
        \item for all $M, N \in F$, $M-N \in F$
        \item for all $M_i \in F \quad i= 1, 2, \cdots$
        $$
        \displaystyle \bigcup_{i=1}^{\infty} M_i \in F
        $$
    \end{enumerate}
\end{definition}
\begin{definition}[$\sigma$-algebra] [13]\\
A $\sigma$ algebra is a $\sigma$-ring that contains $X$
\end{definition}
\begin{definition}[Monotone Class] [13]
    A nonempty class $\mathcal{M}$ is called a monotone class, iff, for every monotone sequence $\{M_n\} \subset \mathcal{M}$, we have $\displaystyle\lim_n M_n \in \mathcal{M}$
\end{definition}
The following theorem establishes that a ring is generated by semiring as can be seen in [13]
\begin{theorem}
    Let $\mathcal{J}$ be a class. There exists a unique ring $\mathfrak{R}_0$, such that it is the smallest ring containing $\mathcal{J}$, such that
    $$
    \mathfrak{R}_0 \supset \mathcal{J}
    $$
    and for ring $\mathfrak{R}$
    $$
    \mathfrak{R} \supset \mathcal{J} \implies \mathfrak{R} \supset \mathfrak{R}_0
    $$
    $\mathfrak{R}_0$ is called the ring generated by $\mathcal{J}$ and is denoted by $\mathfrak{R}(\mathcal{J})$.
\end{theorem}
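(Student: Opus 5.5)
The plan is the standard intersection argument. We work inside the power set $\mathcal{P}(X)$, where $X$ is the ambient set containing the members of $\mathcal{J}$. Let $\Sigma$ be the collection of all rings $\mathfrak{R}\subset\mathcal{P}(X)$ with $\mathfrak{R}\supset\mathcal{J}$. First we check that $\Sigma$ is nonempty. The power set $\mathcal{P}(X)$ is itself a ring: for any $M,N\subset X$, both $M\cup N$ and $M-N$ are subsets of $X$. It is also nonempty and contains $\mathcal{J}$, so $\mathcal{P}(X)\in\Sigma$.

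Next we define
\[
\mathfrak{R}_0=\bigcap_{\mathfrak{R}\in\Sigma}\mathfrak{R}
\]
and verify that $\mathfrak{R}_0$ is a ring.
\begin{enumerate}
\item[(i)] Take $M,N\in\mathfrak{R}_0$. Then $M,N$ lie in every $\mathfrak{R}\in\Sigma$. Each such $\mathfrak{R}$ is a ring, so $M\cup N\in\mathfrak{R}$ and $M-N\in\mathfrak{R}$. Hence $M\cup N$ and $M-N$ lie in the intersection $\mathfrak{R}_0$.
\item[(ii)] Nonemptiness is needed because the definition of a ring requires a nonempty class. If $\mathcal{J}\neq\emptyset$, then $\mathfrak{R}_0\supset\mathcal{J}$ is nonempty. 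If $\mathcal{J}=\emptyset$, we use that every ring contains $\emptyset=M-M$, so $\emptyset\in\mathfrak{R}_0$. In this case $\mathfrak{R}_0=\{\emptyset\}$, which is indeed a ring.
\end{enumerate}
Thus $\mathfrak{R}_0$ is a ring containing $\mathcal{J}$.

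The minimality property then follows directly from the construction. If $\mathfrak{R}$ is any ring with $\mathfrak{R}\supset\mathcal{J}$, then $\mathfrak{R}\in\Sigma$, and therefore $\mathfrak{R}\supset\mathfrak{R}_0$. Uniqueness follows by antisymmetry. Suppose $\mathfrak{R}_0'$ is another ring with the same two properties. Applying minimality of each to the other gives $\mathfrak{R}_0\subset\mathfrak{R}_0'$ and $\mathfrak{R}_0'\subset\mathfrak{R}_0$, so $\mathfrak{R}_0=\mathfrak{R}_0'$.

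The only delicate points are the two I have made explicit. One is fixing an ambient $X$, so that $\Sigma$ is a set and is nonempty. The other is the nonemptiness requirement on rings when $\mathcal{J}=\emptyset$. Neither is a real obstacle. The semiring structure of $\mathcal{J}$ is not needed for existence. It would matter only for the finer description of $\mathfrak{R}(\mathcal{J})$ as finite disjoint unions of members of $\mathcal{J}$, which is not asserted here.
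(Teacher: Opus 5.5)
Your proof is correct, and it is essentially the same approach: the paper gives no argument of its own for this statement, only a pointer to [13], and the standard proof there is your construction of $\mathfrak{R}(\mathcal{J})$ as the intersection of all rings containing $\mathcal{J}$ (nonempty because $\mathcal{P}(X)$ is such a ring), with minimality and uniqueness following as you say. Your explicit handling of the ambient set $X$ and of the case $\mathcal{J}=\emptyset$ fills in details that the cited source leaves implicit.
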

In the same way, we can define the concepts of $\sigma$-ring and monotone class generated $\mathcal{J}$, and use $F(\mathcal{J})$, $\mathcal{M}(\mathcal{J})$ to denote them, respectfully.
The following theorems are proved in [13]
\begin{theorem}
    Let $\mathcal{J}$ be a semiring. $\mathfrak{R}(\mathcal{J})$ is the class of all finite, disjoint unions of sets in $\mathcal{J}$.
\end{theorem}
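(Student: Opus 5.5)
The plan is to write $\mathcal{U}$ for the class of all finite disjoint unions $\bigcup_{i=1}^{n} A_i$ with $A_i \in \mathcal{J}$ pairwise disjoint, the empty union $\emptyset$ included. I will show $\mathcal{U} = \mathfrak{R}(\mathcal{J})$ by proving both inclusions.

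The inclusion $\mathcal{U} \subset \mathfrak{R}(\mathcal{J})$ is immediate. $\mathfrak{R}(\mathcal{J})$ contains $\mathcal{J}$ and is closed under finite unions. It also contains $\emptyset = A - A$ for any $A \in \mathcal{J}$, and $\mathcal{J}$ is nonempty. For the reverse inclusion I use the minimality in the preceding theorem: it suffices to show that $\mathcal{U}$ is a ring containing $\mathcal{J}$. Containment of $\mathcal{J}$ holds by taking $n=1$, and nonemptiness follows.

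The key step, and the main obstacle, is a lemma: for $A, B \in \mathcal{J}$, the difference $A - B$ lies in $\mathcal{U}$.
\begin{enumerate}
\item By condition 1 of the semiring definition, $A \cap B \in \mathcal{J}$, and $A \cap B \subset A$.
\item Condition 2 then gives a chain $A\cap B = A_0 \subset A_1 \subset \cdots \subset A_n = A$ with each $D_i = A_i - A_{i-1} \in \mathcal{J}$.
\item The $D_i$ are pairwise disjoint, because they are successive layers of an increasing chain.
\item They telescope: $A - B = A - (A\cap B) = \bigcup_{i=1}^n D_i \in \mathcal{U}$.
\end{enumerate}
The care needed here is only to check the disjointness and the telescoping identity against the paper's form of the semiring axiom.

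Next I show that $\mathcal{U}$ is closed under finite intersections. If $E = \bigcup_i A_i$ and $F = \bigcup_j B_j$ are in $\mathcal{U}$, then $E \cap F = \bigcup_{i,j} (A_i \cap B_j)$. Each piece lies in $\mathcal{J}$ by condition 1, and distinct pairs $(i,j)$ give disjoint pieces. Moreover, if $E_1,\dots,E_m\in\mathcal{U}$ then $\bigcap_k E_k \in \mathcal{U}$ by induction.

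For differences, write
\[
E - F = \bigcup_i \Bigl( A_i - \bigcup_j B_j \Bigr) = \bigcup_i \bigcap_j (A_i - B_j).
\]
Each $A_i - B_j$ is in $\mathcal{U}$ by the lemma, so each $\bigcap_j (A_i - B_j)$ is in $\mathcal{U}$. The sets $\bigcap_j (A_i - B_j)$ for different $i$ are contained in the disjoint $A_i$. Hence concatenating their decompositions gives a finite disjoint union of members of $\mathcal{J}$, so $E - F \in \mathcal{U}$.

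Finally, for unions, $E \cup F = (E - F) \cup F$ is a union of two disjoint members of $\mathcal{U}$, so it is itself a finite disjoint union of sets in $\mathcal{J}$. Thus $\mathcal{U}$ is a ring containing $\mathcal{J}$, hence $\mathfrak{R}(\mathcal{J}) \subset \mathcal{U}$, and equality follows.
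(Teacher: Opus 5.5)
Your proof is correct; the paper gives no proof of this result and simply defers to Wang [13], and your argument is the standard textbook one (essentially Halmos's): intersections via the pieces $A_i\cap B_j$, the difference of two members of $\mathcal{J}$ via the semiring chain from $A\cap B$ up to $A$, general differences via $E-F=\bigcup_i\bigcap_j(A_i-B_j)$, and unions via $E\cup F=(E-F)\cup F$. Counting the empty union as a member of $\mathcal{U}$ is the right choice, because the paper's semiring axioms do not explicitly place $\emptyset$ in $\mathcal{J}$; the same convention also covers the trivial cases $A\cap B=A$ in your lemma and $F=\emptyset$ in the difference formula.
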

\begin{theorem}
    $F(\mathcal{J})=F(\mathfrak{R}(\mathcal{J}))$.
\end{theorem}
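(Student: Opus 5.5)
We show the two inclusions $F(\mathcal{J})\subset F(\mathfrak{R}(\mathcal{J}))$ and $F(\mathfrak{R}(\mathcal{J}))\subset F(\mathcal{J})$, using only the minimality property of generated classes from the preceding theorem and its $\sigma$-ring analogue. No structure of $\mathcal{J}$ (such as being a semiring) is needed, since the argument is purely about generated classes.

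For the first inclusion, note that $\mathcal{J}\subset \mathfrak{R}(\mathcal{J})\subset F(\mathfrak{R}(\mathcal{J}))$. Thus $F(\mathfrak{R}(\mathcal{J}))$ is a $\sigma$-ring containing $\mathcal{J}$. By minimality of $F(\mathcal{J})$ among $\sigma$-rings containing $\mathcal{J}$, we get $F(\mathcal{J})\subset F(\mathfrak{R}(\mathcal{J}))$.

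For the reverse inclusion, the key observation is that every $\sigma$-ring is a ring. Let $M,N\in F(\mathcal{J})$. Closure under differences is part of the definition of a $\sigma$-ring. For unions, take the sequence $M_1=M$ and $M_i=N$ for $i\ge 2$; countable unions give $M\cup N=\bigcup_i M_i\in F(\mathcal{J})$. So $F(\mathcal{J})$ is a ring containing $\mathcal{J}$. By the minimality part of the preceding theorem, $\mathfrak{R}(\mathcal{J})\subset F(\mathcal{J})$.

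Now $F(\mathcal{J})$ is a $\sigma$-ring containing $\mathfrak{R}(\mathcal{J})$. By minimality of $F(\mathfrak{R}(\mathcal{J}))$, we get $F(\mathfrak{R}(\mathcal{J}))\subset F(\mathcal{J})$, and the two inclusions give equality.

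There is no real obstacle. The only points that need care are checking that a $\sigma$-ring is a ring, and that the existence and minimality of $F(\cdot)$ hold just as for rings. The latter holds because an intersection of $\sigma$-rings is again a $\sigma$-ring, and $\mathcal{P}(X)$ is a $\sigma$-ring containing any class, so the intersection of all $\sigma$-rings containing a given class is the generated one.
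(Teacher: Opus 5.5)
Your proof is correct. Both inclusions follow from the minimality of the generated classes together with the fact that every $\sigma$-ring is a ring, and you correctly verify that fact against the paper's definitions (a nonempty class closed under differences and countable unions). The paper gives no proof of its own here; it only cites Wang [13], where the standard argument is this same two-inclusion minimality argument, so your approach matches the intended one.
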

\begin{definition} (Measurable space)\\ Let \( X \) be non-empty set. A measure space is defined on \(X\) if the following conditions  are satisfied.
\begin{enumerate}[label=$\square$]
    \item[1]  A $\sigma$-algebra $F$ exist on the nonempty subsets of sets $X$.
    \item[2]  A function $\mu$ is defined on $X$ such that $\mu(P) \geq 0$ and $\mu(\displaystyle\bigcup_{i=1}^{\infty} P_i) = \displaystyle\sum_{i=1}^{\infty} \mu(P_i)$ $\forall$ $P \in F$.
\end{enumerate}
$X$ is called a measurable space, if $X \in F$.
\end{definition}

\begin{definition} [Fuzzy Measure] [13] \\
Let $X$ be a non-empty set, and a collection of classes of subsets of $X$ be denoted as $\tau$. If $\mu$ is defined as a measure such that\\
\[
\mu: \tau \rightarrow [0, \infty].
\] \\
Then $\mu$ is a fuzzy measure on $(X, \tau)$ if the following conditions are satisfied:
\begin{enumerate}
    \item $\mu(\emptyset) = 0$ when $\emptyset \in \tau$.
    \item $\mathbb{P} , \mathbb{Q} \in \tau$ and $\mathbb{P} \subset \mathbb{Q}$ imply $\mu(\mathbb{P}) \leq \mu(\mathbb{Q})$ (monotonicity).
    \item $\{\mathbb{P}_n\} \subset \tau, \ \mathbb{P}_1 \subset \mathbb{P}_2 \subset \cdots$ and \ $\displaystyle\bigcup_{n=1}^{\infty} \mathbb{P}_n \in \tau \quad \text{imply,} \
    \lim\limits_{n} \mu(\mathbb{P}_n) = \mu\left(\displaystyle\bigcup_{n=1}^{\infty} \mathbb{P}_n\right) \quad \text{(Continuity from below).}$
    \item $\{\mathbb{P}_1\} \subset \tau,\ \mathbb{P}_1 \supseteq \mathbb{P}_2 \supseteq \cdots$, $\mu(\mathbb{P}_1) < \infty$ and \
          $\displaystyle\bigcap_{n=1}^{\infty} \mathbb{P}_n \in \tau \quad \text{imply}, \
          \lim\limits_{n} \mu(\mathbb{P}) = \mu\left(\displaystyle\bigcap_{n=1}^{\infty} \mathbb{P}_n\right) \quad \text{(Continity from above).}$
\end{enumerate}
Hence $\mu$ is a lower or upper semi-continuous fuzzy measure on $(X, \tau)$. If the condition $(1)$ $(2)$ and $(3)$ or $(1)$ $(2)$ and $(4)$ above are satisfied respectively.
\end{definition}

$\mu$ is regular iff $\mu(X) = 1$.

Hence $(X, F, \mu)$ is a fuzzy measure space if $\mu$ is a fuzzy measure on a measurable space $(X, F)$.

\begin{Example}
    Let $\mu$ be fuzzy measurabe on $(X, F)$, for all $\mathbb{P} \in F$, if
\[
\mu(\mathbb{P}) =
\begin{cases}
1, & a_0 \in \mathbb{P} \\
0, & a_0 \notin \mathbb{P} \quad \quad ,
\end{cases}
\]
such that there exists a set point $a_0$ in $X$, then $\mu$ is a probability measure and hence a regular fuzzy measure.
\end{Example}

\begin{Example}

Let $X = \mathbb{N}$, define $\tau = \mathcal{P}(X), \ \text{where} \ \mathcal{P}(X) = \text{power set of X},$ for all $\mathbb{P} \in F$, if
\[
\mu(\mathbb{P}) = \left( \frac{|\mathbb{P}|}{n} \right)^2 \quad \quad ,
\]
where $|\mathbb{P}|$ is the cardinality of $\mathbb{P}$, then $\mu$ is a regular fuzzy measure.
\end{Example}

\iffalse
\begin{definition}[Autocontinuity] [13]
    A fuzzy measure $\mu$ is said to auto continuous iff the map: $\mu : \ F \to [-\infty, \infty]$ satisfies the following conditions
    \[
    \lim_n \mu (P_n \cup P) = \mu (P)
    \]
    \[
    \bigg( or \ \lim_n \mu (P_n - P) = \mu (P)\bigg)
    \]
    whenever $P_n, P \in F$, and $P_n \cap P = \emptyset \ (or \ P \subset P_n, \quad respectively) $, $n = 1, 2, ...$ and $\displaystyle\lim_n \mu (P_n) = 0$.

\end{definition}
\fi

\subsection{Null-additivity}

\begin{definition}[Null-additivity][13]\\
A fuzzy measure \(\mu\) is said to be \textbf{null-additive} if and only if
\[
\mu(P \cup Q) = \mu(P)
\]
whenever \(P, Q \in F\), \(P \cap Q = \emptyset\) and \(\mu(Q) = 0\).
\end{definition}

The following theorems are proved in [13]
\begin{theorem}
If for any nonempty set \(Q \in F\), \(\mu(Q) \neq 0\), then \(\mu\) is null-additive.
\end{theorem}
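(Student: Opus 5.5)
The hypothesis makes the null-additivity condition almost vacuous, so the plan is a direct verification from the definition of null-additivity, splitting on whether the null set is empty. Let $P, Q \in F$ with $P \cap Q = \emptyset$ and $\mu(Q) = 0$; we must show $\mu(P \cup Q) = \mu(P)$.

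The first step uses the hypothesis in contrapositive form. Every nonempty $Q \in F$ satisfies $\mu(Q) \neq 0$. Hence the only member of $F$ that can have measure zero is the empty set. From $\mu(Q) = 0$ we therefore conclude $Q = \emptyset$.

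The second step is the conclusion. Since $Q = \emptyset$, we have $P \cup Q = P$, and so trivially
\[
\mu(P \cup Q) = \mu(P).
\]
For consistency, note that $\emptyset \in F$ because $F$ is a $\sigma$-algebra (indeed $P - P = \emptyset$), and $\mu(\emptyset) = 0$ by condition (1) of the definition of fuzzy measure. So the case $Q = \emptyset$ genuinely occurs and agrees with the hypothesis $\mu(Q) = 0$. The disjointness assumption $P \cap Q = \emptyset$ is not needed beyond this.

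There is no real obstacle here; monotonicity and continuity of $\mu$ are never used. The only point needing care is the logical reading of the hypothesis, namely that "$\mu(Q) \neq 0$ for nonempty $Q$" is exactly the statement that $\mu$-null sets in $F$ are empty. Once that is recorded, the argument is a single line.
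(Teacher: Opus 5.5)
Your argument is correct: the hypothesis says precisely that $\emptyset$ is the only member of $F$ with measure zero, so $\mu(Q)=0$ forces $Q=\emptyset$, and then $\mu(P\cup Q)=\mu(P)$ trivially. The paper gives no proof of its own here and simply cites [13]; the standard argument is this same one-line observation, so your route is the intended one.
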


\begin{theorem}
Let \(\mu\) be a null-additive fuzzy measure and \(P \in F\). Then we have
\[
\lim_n \mu(P \cup Q_n) = \mu(P)
\]
for any decreasing set sequence \(\{Q_n\} \subset F\) for which \(\displaystyle\lim_n \mu(Q_n) = 0\) and there exists at least one positive integer \(n_0\) such that \(\mu(P \cup Q_{n_0}) < \infty\) as \(\mu(P) < \infty\).
\end{theorem}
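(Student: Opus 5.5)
This is the standard argument from Wang and Klir [13]. The plan is to squeeze $\mu(P\cup Q_n)$ between $\mu(P)$ and a quantity that tends to $\mu(P)$, and to extract that quantity from continuity from above of $\mu$. The ingredients are monotonicity, continuity from above, and null-additivity.

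\textbf{Lower bound and monotonicity.} Since $P\subset P\cup Q_n$, monotonicity gives $\mu(P)\le \mu(P\cup Q_n)$ for every $n$. Because $\{Q_n\}$ is decreasing, the sets $P\cup Q_n$ also form a decreasing sequence in $F$. Hence $\mu(P\cup Q_n)$ is nonincreasing in $n$, and $\lim_n \mu(P\cup Q_n)$ exists and is at least $\mu(P)$.

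\textbf{Passing to the limit.} By hypothesis there is $n_0$ with $\mu(P\cup Q_{n_0})<\infty$. Apply continuity from above (condition (4) of the fuzzy measure) to the decreasing sequence $\{P\cup Q_n\}_{n\ge n_0}$. This gives
\[
\lim_n \mu(P\cup Q_n)=\mu\Big(\bigcap_{n}(P\cup Q_n)\Big)=\mu\Big(P\cup \bigcap_n Q_n\Big).
\]
Set $Q=\bigcap_n Q_n\in F$. Since $Q\subset Q_n$ for each $n$, monotonicity gives $\mu(Q)\le \mu(Q_n)\to 0$, so $\mu(Q)=0$.

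\textbf{Null-additivity step (the main obstacle).} It remains to show $\mu(P\cup Q)=\mu(P)$. Null-additivity as defined requires disjoint sets, so first write $P\cup Q=P\cup(Q-P)$ with $Q-P\in F$ and $P\cap(Q-P)=\emptyset$. By monotonicity, $\mu(Q-P)\le\mu(Q)=0$. Null-additivity then yields $\mu(P\cup Q)=\mu(P)$. Combined with the previous step, this gives $\lim_n\mu(P\cup Q_n)=\mu(P)$.

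The only delicate points are this reduction to the disjoint case and making sure the finiteness hypothesis at $n_0$ is exactly what condition (4) requires. The assumption $\mu(P)<\infty$ is not otherwise needed beyond guaranteeing that the statement is meaningful.
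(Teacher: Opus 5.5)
Your argument is correct and is essentially the standard proof from Wang and Klir [13]; the paper does not prove this theorem itself but defers to that source. The proof runs through monotonicity for the lower bound, then continuity from above on the tail $\{P\cup Q_n\}_{n\ge n_0}$, which is exactly where $\mu(P\cup Q_{n_0})<\infty$ enters, and then null-additivity, applied after your reduction to the disjoint set $Q-P$, to get $\mu\bigl(P\cup\bigcap_n Q_n\bigr)=\mu(P)$; your handling of the garbled phrase ``as $\mu(P)<\infty$'' is also right, since that condition follows from $\mu(P\cup Q_{n_0})<\infty$ by monotonicity and could not replace it as a hypothesis.
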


\begin{theorem}
Let \(\mu\) be a null-additive fuzzy measure and \(P \in F\). Then we have
\[
\lim_n \mu(P - Q_n) = \mu(P)
\]
for any decreasing set sequence \(\{Q_n\} \subset F\) for which \(\displaystyle\lim_n \mu(Q_n) = 0\).
\end{theorem}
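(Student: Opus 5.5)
We squeeze $\mu(P-Q_n)$ between $\mu(P)$ (from above) and a quantity that tends to $\mu(P)$ (from below), using monotonicity of the fuzzy measure, null-additivity, and the preceding theorem on $\lim_n \mu(P\cup Q_n)$.

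\emph{Upper bound and monotonicity.} Since $P-Q_n\subset P$, monotonicity gives $\mu(P-Q_n)\le\mu(P)$ for every $n$. Because $\{Q_n\}$ is decreasing, the sets $P-Q_n$ increase in $n$. Hence the sequence $\mu(P-Q_n)$ is nondecreasing and bounded by $\mu(P)$, so its limit $L$ exists and satisfies $L\le\mu(P)$. It remains to prove $L\ge\mu(P)$.

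\emph{Reduction to the preceding theorem.} We write
\[
P=(P-Q_n)\cup(P\cap Q_n)\subset (P-Q_n)\cup Q_n .
\]
The limit of $\mu$ along the right-hand side is governed by the preceding theorem applied with the set $P-Q_n$ in place of $P$. That theorem, however, uses a \emph{fixed} set $P$, whereas $P-Q_n$ varies with $n$. We handle this by fixing $m$ and applying the preceding theorem to the fixed set $P-Q_m$ and the decreasing sequence $\{Q_n\}_{n\ge m}$:
\[
\lim_{n}\mu\big((P-Q_m)\cup Q_n\big)=\mu(P-Q_m)\le L .
\]
For $n\ge m$ we have $Q_n\subset Q_m$, so $(P-Q_m)\cup Q_n$ need not contain $P$. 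This is the main obstacle: we cannot directly compare $\mu(P)$ with $\mu\big((P-Q_m)\cup Q_n\big)$.

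\emph{Handling the obstacle via the null set $Q=\bigcap_n Q_n$.} By monotonicity $\mu(Q)\le\mu(Q_n)\to0$, so $\mu(Q)=0$. The set $P\cap Q$ is disjoint from $P-Q$, so null-additivity gives $\mu(P)=\mu\big((P-Q)\cup(P\cap Q)\big)=\mu(P-Q)$. Moreover $P-Q=\bigcup_n (P-Q_n)$ is an increasing union, so continuity from below gives
\[
\mu(P-Q)=\lim_n\mu(P-Q_n)=L .
\]
Hence $\mu(P)=L$, which proves the claim. In this route the preceding theorem is not even needed; the proof uses only monotonicity, continuity from below (condition (3) of the fuzzy measure), and null-additivity.

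The one delicate point is the standing assumption on $\mu$. If the paper's fuzzy measures are only required to be continuous from above, then continuity from below is not available. In that case we would instead bound $\mu(P)\le\mu\big((P-Q_n)\cup Q_n\big)$ and try to control the right-hand side through the preceding theorem with a diagonal choice of indices $m=m(n)$. That version is where I expect the real difficulty to lie.
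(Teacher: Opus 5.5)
Your final argument is correct and complete. Monotonicity gives $\mu(\bigcap_n Q_n)=0$, hence $\mu(P\cap\bigcap_n Q_n)=0$, and null-additivity yields $\mu(P)=\mu(P-\bigcap_n Q_n)$. Continuity from below along $P-Q_n\uparrow P-\bigcap_n Q_n$ then identifies this with $\lim_n\mu(P-Q_n)$. This is the standard argument; the paper gives no proof of its own and only refers to [13]. The detour through the preceding theorem is unnecessary and can simply be deleted.

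Your closing caveat is moot, because the paper's definition of a fuzzy measure includes condition (3), continuity from below. No diagonal trick could replace that condition anyway. On $\mathcal{P}(\mathbb{N})$, the set function with $\mu(\mathbb{N})=1$ and $\mu(A)=\tfrac12\sum_{k\in A}2^{-k}$ for $A\neq\mathbb{N}$ is monotone, continuous from above and null-additive. Yet $P=\mathbb{N}$ and $Q_n=\{n,n+1,\dots\}$ give $\mu(Q_n)\to0$ while $\mu(P-Q_n)\to\tfrac12\neq\mu(P)$.
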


\subsection{Auto-continuity}

\begin{definition}[Auto-continuity][13]\\
A fuzzy measure \(\mu\) is said to be \textbf{auto-continuous} iff the map \(\mu: F \to [-\infty, \infty]\) satisfies the following conditions.
\[
\lim_n \mu(P \cup Q_n) = \mu(P)
\]
or
\[
\lim_n \mu(P - Q_n) = \mu(P)
\]
whenever \(P \in F\), \(Q_n \in F\), \(P \cap Q_n = \emptyset\) (or \(Q_n \subset P\), respectively), \(n = 1, 2, \dots\) and \(\displaystyle\lim_n \mu(Q_n) = 0\).
\end{definition}

The following theorems are proved in [13]
\begin{theorem}
Let \(\mu: F \to [-\infty, \infty]\) be an extended real-valued set function. If there exists \(\epsilon > 0\) such that \(|\mu(P)| \ge \epsilon\) for any \(P \in F\), \(P \neq \emptyset\), then \(\mu\) is auto-continuous.
\end{theorem}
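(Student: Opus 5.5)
The idea is that the hypothesis $\lim_n \mu(Q_n)=0$ can only hold if the sets $Q_n$ are eventually empty. Once that is established, both conditions in the definition of auto-continuity hold trivially.

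\textbf{Step 1: the $Q_n$ are eventually empty.} Let $P\in F$ and let $\{Q_n\}\subset F$ satisfy $\lim_n \mu(Q_n)=0$. By the definition of the limit, with the given $\epsilon>0$ there is an $n_0$ such that $|\mu(Q_n)|<\epsilon$ for all $n\ge n_0$. The hypothesis says $|\mu(Q)|\ge\epsilon$ for every nonempty $Q\in F$. So for $n\ge n_0$, $Q_n$ cannot be nonempty, i.e. $Q_n=\emptyset$. The same argument shows that if $\emptyset\in F$ is ever used, nothing further is needed about $\mu(\emptyset)$.

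\textbf{Step 2: the union condition.} Here $P\cap Q_n=\emptyset$ for all $n$. For $n\ge n_0$ we have $P\cup Q_n=P\cup\emptyset=P$. Hence $\mu(P\cup Q_n)=\mu(P)$ for all $n\ge n_0$, and so $\lim_n\mu(P\cup Q_n)=\mu(P)$.

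\textbf{Step 3: the difference condition.} Here $Q_n\subset P$ for all $n$. For $n\ge n_0$ we have $P-Q_n=P$. Therefore $\mu(P-Q_n)=\mu(P)$ eventually, and the limit is $\mu(P)$.

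\textbf{Remarks on the argument.} The only real obstacle is Step 1. It is a direct consequence of the definition of convergence to $0$ combined with the uniform lower bound $\epsilon$. No monotonicity or continuity properties of $\mu$ are needed, which is consistent with the statement allowing $\mu$ to be an arbitrary extended real-valued set function.

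The argument also works when $\mu(P)=\pm\infty$, because the sequence $\mu(P\cup Q_n)$ (respectively $\mu(P-Q_n)$) is eventually constant and equal to $\mu(P)$.
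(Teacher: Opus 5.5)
Your proof is correct: the uniform bound $|\mu(Q)|\ge\epsilon$ on nonempty sets, combined with $\mu(Q_n)\to 0$, forces $Q_n=\emptyset$ for all $n\ge n_0$. Hence $\mu(P\cup Q_n)$ and $\mu(P-Q_n)$ both equal $\mu(P)$ from $n_0$ on. The paper gives no proof of its own and simply cites [13], and this eventual-emptiness argument is the standard proof of the result given there.
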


\begin{theorem}
Let \(\mu: F \to [0, \infty]\) be nondecreasing. \(\mu\) is auto-continuous if and only if
\[
\lim_n \mu(P \Delta Q_n) = \mu(P)
\]
wherever \(P \in F\), $\{Q_n\} \subset F$ and \(\lim_n \mu(Q_n) = 0\). \(P \Delta Q_n = P \cup Q\) whenever \(P \cap Q = \emptyset\).
\end{theorem}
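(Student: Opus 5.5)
The statement characterizes auto-continuity of a nondecreasing \(\mu\) by the symmetric-difference condition. I would prove the two implications separately, relying only on monotonicity of \(\mu\) and on the definition of auto-continuity.

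The key observation is a set-theoretic decomposition:
\[
P\Delta Q_n=(P-Q_n)\cup(Q_n-P).
\]
It gives the sandwich
\[
P-(P\cap Q_n)\subset P\Delta Q_n\subset P\cup (Q_n-P).
\]
Here the sets \(P\cap Q_n\) and \(Q_n-P\) lie in \(F\), since \(F\) is a \(\sigma\)-algebra. Both are contained in \(Q_n\), so by monotonicity
\[
\mu(P\cap Q_n)\le \mu(Q_n) \quad\text{and}\quad \mu(Q_n-P)\le\mu(Q_n),
\]
and both tend to \(0\) whenever \(\lim_n\mu(Q_n)=0\).

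For the direction (\(\Rightarrow\)), assume \(\mu\) is auto-continuous and let \(\lim_n\mu(Q_n)=0\).
\begin{enumerate}
\item Apply the first clause of the definition to the sequence \(Q_n-P\), which is disjoint from \(P\). This gives \(\lim_n\mu(P\cup(Q_n-P))=\mu(P)\).
\item Apply the second clause to the sequence \(P\cap Q_n\), which is contained in \(P\). This gives \(\lim_n\mu(P-(P\cap Q_n))=\mu(P)\).
\item By monotonicity and the sandwich above,
\[
\mu\bigl(P-(P\cap Q_n)\bigr)\le\mu(P\Delta Q_n)\le\mu\bigl(P\cup(Q_n-P)\bigr).
\]
Squeezing then yields \(\lim_n\mu(P\Delta Q_n)=\mu(P)\).
\end{enumerate}
I read the definition of auto-continuity as requiring both clauses, the union clause for disjoint \(Q_n\) and the difference clause for \(Q_n\subset P\). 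I would state this reading explicitly, since the word ``or'' in the definition is ambiguous. Under the weaker reading the forward implication need not hold, so this interpretive point is where I expect the main obstacle to lie.

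For the direction (\(\Leftarrow\)), assume the \(\Delta\)-condition holds.
\begin{enumerate}
\item If \(Q_n\cap P=\emptyset\), then \(P\Delta Q_n=P\cup Q_n\). This is exactly the remark appended to the statement, and it gives the union clause immediately.
\item If \(Q_n\subset P\), then \(P\Delta Q_n=P-Q_n\), which gives the difference clause.
\end{enumerate}
Hence \(\mu\) is auto-continuous. This direction is pure bookkeeping, so all the substantive work is in (\(\Rightarrow\)).
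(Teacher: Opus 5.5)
Your proof is correct. The paper gives no proof of its own (it says the result is proved in [13], Wang's \emph{Fuzzy Measure Theory}), and your argument is essentially the standard one from that source: reduce to the disjoint and contained cases via $Q_n-P$ and $P\cap Q_n$, then squeeze using $P-Q_n\subset P\Delta Q_n\subset P\cup Q_n$. Your reading of auto-continuity as requiring both clauses is also the intended one in [13], where auto-continuity means auto-continuity from above and from below simultaneously.
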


\subsection{Uniform auto-continuity}

\begin{definition}[Uniform auto-continuity][13]\\
A fuzzy measure \(\mu\) is said to be \textbf{uniformly auto-continuous} iff for any $\epsilon > 0$, there exists $\delta=\delta_\epsilon > 0$, such that the map \(\mu: F \to [-\infty, \infty]\) satisfies the following conditions:
\[
\mu(P) - \epsilon \le \mu(P \cup Q) \le \mu(P) + \epsilon
\]
or
\[
\mu(P) - \epsilon \le \mu(P - Q) \le \mu(P) + \epsilon
\]
whenever \(P, Q \in F\), \(P \cap Q = \emptyset\) (or \ \(Q \subset P\), respectively) and \(|\mu(Q)| \le \delta\). A fuzzy measure \(\mu\) is uniformly auto-continuous iff it is both uniformly auto-continuous from above and uniformly auto-continuous from below.
\end{definition}

\begin{theorem}[NEW]
Every fuzzy measure is complete.
\end{theorem}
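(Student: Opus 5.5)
The plan is to fix the meaning of \emph{complete} so that the statement can be proved inside the framework of the Fuzzy Measure definition, where $\mu$ is defined only on the class $F$ (or $\tau$). Since no other notion of completeness appears earlier in the paper, I will use the following one. A fuzzy measure $\mu$ on $(X,F)$ is complete if, for every $\mu$-null set $P\in F$ (that is, $\mu(P)=0$), every subset $Q\subset P$ that lies in the domain $F$ of $\mu$ is again $\mu$-null. The argument will state this reading explicitly at the outset, because the stronger reading (every subset of a null set belongs to $F$) cannot follow from axioms (1)--(4) alone. Those axioms never enlarge the class $F$.

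The key steps, in order, are these. First, take an arbitrary $P\in F$ with $\mu(P)=0$ and an arbitrary $Q\in F$ with $Q\subset P$. Second, apply the monotonicity axiom (2) of the Fuzzy Measure definition to the pair $Q\subset P$, which gives $\mu(Q)\le\mu(P)=0$. Third, use that $\mu$ takes values in $[0,\infty]$ to get $\mu(Q)\ge 0$. Together these force $\mu(Q)=0$, and this proves completeness in the stated sense.

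I will also check that the conclusion survives the natural strengthenings used later for null-additive and auto-continuous measures. For a countable decreasing family $P\supseteq Q_1\supseteq Q_2\supseteq\cdots$ of members of $F$ with $\mu(P)=0$, each $Q_n$ is null by the single-set case. If $\bigcap_n Q_n\in F$, then continuity from above (axiom (4), applicable since $\mu(Q_1)=0<\infty$) gives $\mu\bigl(\bigcap_n Q_n\bigr)=\lim_n\mu(Q_n)=0$. For a disjoint companion $R\in F$ of a null $Q$, null-additivity as in the Null-additivity definition gives $\mu(R\cup Q)=\mu(R)$. So adjoining or removing null sets never changes measures, which is the practical content of completeness that later sections rely on.

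The main obstacle is definitional rather than computational. The word \emph{complete} must be pinned down so that it refers only to sets on which $\mu$ is already defined. Once that is done, the proof reduces to the monotonicity axiom together with nonnegativity, and no earlier theorem beyond the Fuzzy Measure definition is needed.
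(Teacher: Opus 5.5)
Your monotonicity step is valid, and it is exactly the last line of the paper's proof ($Y\subset Q_n$ with $\mu(Q_n)=0$ gives $\mu(Y)\le\mu(Q_n)=0$). The problem is what it establishes. Under your reading, ``complete'' is just axiom (2) restricted to null sets. The paper is after the usual notion, that every subset of a $\mu$-null set lies in $F$: it ends by concluding that $Y$ is \emph{fuzzy measurable}. It tries to get there by first using uniform auto-continuity to obtain $\mu(P\Delta Q_n)-\epsilon\le\mu(P)\le\mu(P-Q_n)+\epsilon$ when $\mu(Q_n)\to 0$.

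You are right that the usual notion cannot follow from axioms (1)--(4), and you should state the consequence openly: in that sense the theorem is false. Take $X=\{a,b,c\}$, $F=\{\emptyset,\{a,b\},\{c\},X\}$, and $\mu(\emptyset)=\mu(\{a,b\})=0$, $\mu(\{c\})=\mu(X)=1$. This is a probability measure, hence a regular fuzzy measure; $\{a,b\}$ is null, yet $\{a\}\notin F$. Lebesgue measure on the Borel sets is the classical instance. The paper's route does not escape this. Uniform auto-continuity is an extra hypothesis, not a consequence of the fuzzy-measure axioms. Its final inference also presupposes $Y\in F$, since $\mu(Y)$ is otherwise undefined. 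So you have proved the only part of the statement that holds for every fuzzy measure, and you should present it as that, not as a proof of completeness.

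Your supplementary paragraph contains an actual error. In the paper, null-additivity is a separate property with its own definition and its own sufficient condition; not every fuzzy measure has it. So ``adjoining or removing null sets never changes measures'' is false in general. On $X=\{a,b\}$ with $F=\mathcal{P}(X)$, set $\mu(\emptyset)=\mu(\{a\})=\mu(\{b\})=0$ and $\mu(X)=1$. This is a fuzzy measure, and $\{b\}$ is null and disjoint from $\{a\}$, yet $\mu(\{a\}\cup\{b\})=1\neq 0=\mu(\{a\})$. The same example violates uniform auto-continuity in both forms for any $\epsilon<1$: use $P=\{a\}$, $Q=\{b\}$ for the union form and $P=X$, $Q=\{b\}$ for the difference form. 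This is another way to see that the paper's argument cannot run for an arbitrary fuzzy measure. Restrict that remark explicitly to null-additive measures.

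Your decreasing-sequence remark is correct but does not need continuity from above: $\bigcap_n Q_n\subset P$ and monotonicity already give $\mu\bigl(\bigcap_n Q_n\bigr)=0$.
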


\begin{proof}
For any \(P \in F\), every sequence \(\{Q_n\} \subset F\) with \(\displaystyle\lim_n \mu(Q_n) = 0\), we have \(Q_n - P \in F\) and \(\mu(Q_n - P) \le \mu(Q_n)\). Since every bounded infinite sequence has a monotone subsequence, there exists a subsequence \(\{Q_{n_k}\} \) of \(\{Q_n\}\) where \(k=1,2,\dots\) such that
\[
\lim_k \mu\left(\bigcup_{k=1}^{\infty} Q_{n_k}\right) = 0.
\]
Now we have
\[
0 \le \mu(Q_{n_k} - P) \le \mu(Q_{n_k}) = 0.
\]
However
\[
\mu(P - Q_{n_k}) = \mu(P - (Q_{n_k} - P)) \quad \text{and} \quad Q \cap P \subset P,
\]
where
\[
0 \le \mu(Q_{n_k} \cap P) \le \mu(Q_{n_k}) = 0.
\]
Hence by uniform auto-continuity, $\forall \ \epsilon > 0$ there exists \(\delta = \delta_\epsilon > 0\) such that \(|\mu(Q_{n_k})| \le \delta\).
Since
\[
\mu(P \cap Q_{n_k}) \le \mu(Q_{n_k}) \le \delta.
\]
It follows that
\[
\mu(P) = \mu((P - Q_{n_k}) \cup (P \cap Q_{n_k})) \le \mu(P - Q_{n_k}) + \epsilon.
\]
Also since
\[
\mu(Q_{n_k} - P) \le \mu(Q_{n_k}) \le \delta,
\]
we have
\[
\mu(P) \ge \mu(P - Q_{n_k}) = \mu[(P \Delta Q_{n_k}) - (Q_{n_k} - P)] \ge \mu(P \Delta Q_{n_k}) - \epsilon.
\]
Hence
\[
\mu(P \Delta Q_{n}) - \epsilon \le \mu(P) \le \mu(P - Q_{n}) + \epsilon.
\]
Thus \(Q_n\) is fuzzy measurable. If \(Q_n\) is a fuzzy measurable set sequence with \(\mu(Q_n) = 0\), we have \(Y \in Q_n\), where \(\mu(Y) = 0\). Since \(\mu(Y) \le \mu(Q_n)\), \(Y\) is fuzzy measurable, and this completes the proof.
\end{proof}

\section{Measurable functions on fuzzy measurable space}
\begin{definition} [Borel function] [13] \\
Suppose we defined $\mathbb{R} = (-\infty, \infty)$ and define $\mathbb{R}^n = \mathbb{R} \times \mathbb{R} \times \mathbb{R} \times \cdots \times \mathbb{R}$ an $n-$dimensional product spaces. Now, we denote
\[
\mathcal{J}^{(n)} = \left\{ \prod_{i=1}^{n} ({\mathfrak{a} _i, \mathbf{b}_i}) : \quad -\infty < \mathfrak{a}_i \leq \mathfrak{b}_i < \infty, \ i=1,2,...,n \right\},
\]
$\sigma$-algebra $B^{(n)} = F(\mathcal{J}^{(n)})$ is a Borel field on $\mathbb{R}^n$. The sets in $B^{(n)}$ are said to be $n$-dimensional Borel sets.
Note: A function defined as
\[
f: \mathbb{R}^n \rightarrow \mathbb{R}
\]
is a Borel function if and only if it defines a measurable function on the measurable space $(\mathbb{R}^n, B^{(n)})$.
\end{definition}

\begin{definition} [Measurable function] [13] \\
Given a measurable space $(X, F)$ and a fuzzy measure defined as $\mu: F \rightarrow [0, \infty)$, $\exists$ a Borel field $B$ on $(-\infty, \infty)$ such that:
\[
f: X \rightarrow (-\infty, \infty) \text{ on } X \text{ is measurable,}
\]
then,
\[
f^{-1}(W) = \{a \mid f(a) \in W \} \in F.
\]
Note: If $\{a \mid f(a) = c\} \in F$ for all $c \in (-\infty, \infty)$, then $f$ is called a measurable function
\end{definition}

\section{Fuzzy Integrals:}
\begin{definition} [$\lambda$-cut set] [13] \\
Given a measurable space $(X, F)$, a finite non-negative class of all measurable functions defined on measurable space is denoted as $\mathcal{F}$.

\begin{itemize}
    \item $\lambda$-cut set: For every $f \in \mathcal{F}$, define $\mathcal{F}_{\lambda} = \{ a \mid f(a) \geq \lambda \}$.
    \item Strict $\lambda$-cut set: $\forall \ f \in \mathcal{F}$, $\mathcal{F}_{\lambda+} = \{ a \mid f(a) > \lambda \}$, $\forall \ a \in X$ and $\lambda \in [0,\infty]$
\end{itemize}
\end{definition}

\begin{definition} [Fuzzy Integral] [13] \\
Suppose $P \in F$ and $f \in \mathcal{F}$. Then we define the fuzzy integral of $f$ over $P$ relative to the fuzzy measure $\mu$ by
\[
\int_{P} f \, d\mu = \sup_{\lambda \in [0, \infty]} [\lambda \cap \mu(P \cap \mathcal{F}_{\lambda})]
\]
When $P$ = $X$, the fuzzy integral is denoted as $\displaystyle \int f \, d\mu$.
\end{definition}
The following lemma gives the properties of fuzzy integral as can be seen in [13]
\begin{lemma}
Both $\mathcal{F}_{\lambda}$ and $\mathcal{F}_{\lambda+}$ are non-increasing with respect to $\lambda$ and $\mathcal{F}_{\lambda+} > \mathcal{F}_{\delta}$ where $\lambda < \delta$.
\end{lemma}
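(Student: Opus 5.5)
The plan is to argue directly from the definitions of the $\lambda$-cut set $\mathcal{F}_{\lambda}=\{a \mid f(a)\ge \lambda\}$ and the strict $\lambda$-cut set $\mathcal{F}_{\lambda+}=\{a \mid f(a)>\lambda\}$ for a fixed $f\in\mathcal{F}$, by elementwise inclusion. I read ``non-increasing with respect to $\lambda$'' as: $\lambda\le\delta$ implies $\mathcal{F}_{\delta}\subseteq\mathcal{F}_{\lambda}$ and $\mathcal{F}_{\delta+}\subseteq\mathcal{F}_{\lambda+}$. I read the relation $\mathcal{F}_{\lambda+} > \mathcal{F}_{\delta}$ for $\lambda<\delta$ as the set inclusion $\mathcal{F}_{\delta}\subseteq \mathcal{F}_{\lambda+}$, since that is the only meaningful order on these sets. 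Nothing beyond the order properties of $[0,\infty]$ is needed; measurability of $f$ plays no role.

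\emph{Step 1 (monotonicity of $\mathcal{F}_\lambda$).} Take $\lambda\le\delta$ and $a\in\mathcal{F}_{\delta}$. Then $f(a)\ge\delta\ge\lambda$, so $a\in\mathcal{F}_{\lambda}$. Hence $\mathcal{F}_{\delta}\subseteq\mathcal{F}_{\lambda}$.

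\emph{Step 2 (monotonicity of $\mathcal{F}_{\lambda+}$).} Similarly, if $a\in\mathcal{F}_{\delta+}$ then $f(a)>\delta\ge\lambda$, so $a\in\mathcal{F}_{\lambda+}$. Hence $\mathcal{F}_{\delta+}\subseteq\mathcal{F}_{\lambda+}$.

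\emph{Step 3 (comparison of strict and non-strict cuts).} For $\lambda<\delta$ and $a\in\mathcal{F}_{\delta}$, we have $f(a)\ge\delta>\lambda$, so $a\in\mathcal{F}_{\lambda+}$. Hence $\mathcal{F}_{\delta}\subseteq\mathcal{F}_{\lambda+}$. I would also record the trivial inclusion $\mathcal{F}_{\lambda+}\subseteq\mathcal{F}_{\lambda}$. This gives the chain
\[
\mathcal{F}_{\delta+}\subseteq\mathcal{F}_{\delta}\subseteq\mathcal{F}_{\lambda+}\subseteq\mathcal{F}_{\lambda}\qquad(\lambda<\delta),
\]
which contains all the claims at once.

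There is no real obstacle. The only points needing care are interpretive: fixing the meaning of ``$>$'' between sets as inclusion, not strict inclusion. Strict inclusion can fail, for instance when $f$ takes no values in $(\lambda,\delta)$, so I would remark that only $\supseteq$ holds in general. One should also allow $\lambda,\delta\in[0,\infty]$. The endpoint $\delta=\infty$ causes no trouble because $f$ is finite-valued, so $\mathcal{F}_{\infty}=\emptyset$.
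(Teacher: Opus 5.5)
Your argument is correct. The paper gives no proof of this lemma and simply cites [13]; your elementwise check, which yields $\mathcal{F}_{\delta+}\subseteq\mathcal{F}_{\delta}\subseteq\mathcal{F}_{\lambda+}\subseteq\mathcal{F}_{\lambda}$ for $\lambda<\delta$ and reads ``$>$'' as the non-strict inclusion $\supseteq$ (rightly, since strict inclusion fails when $f$ takes no values in $(\lambda,\delta)$), is the standard argument behind the cited result.
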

$\textbf{Rules of fuzzy Integral}$ [13]
\begin{enumerate}
    \item $\mu(P) = 0$, then $\displaystyle \int_P f \, d\mu = 0$ for any $f \in \mathcal{F}$.
    \item $\displaystyle \int f \, d\mu = 0$, then $\mu(P \cap \{a \mid f(a) > 0\}) = 0$.
    \item $f_1 \leq f_2$, then $\displaystyle \int_P f_1 \, d\mu \leq \displaystyle \int_P f_2 \, d\mu$.
    \item $\displaystyle \int_P f \, d\mu = \displaystyle \int f \cdot \chi_P \, d\mu$, where $\chi_P$ is the characteristic function of $P$.
    \item $\displaystyle \int_P m \, d\mu = m \cdot \mu(P)$ for any $m \in [0, \infty]$.
    \item $\displaystyle \int_P (f + m) \, d\mu \leq \displaystyle \int_P f_1 \, d\mu + \displaystyle \int_P m \, d\mu$.
    \item If $Q \subset P$, then $\displaystyle \int_Q f_1 \, d\mu \geq \displaystyle \int_Q f_2 \, d\mu$.
    \item $\displaystyle \int_P (f_1 \vee f_2) \, d\mu = \displaystyle \int_P f_1 \, d\mu \vee \displaystyle \int_P f_2 \, d\mu$.
    \item $\displaystyle \int_P (f_1 \land f_2) \, d\mu = \displaystyle \int_P f_1 \, d\mu \land \displaystyle \int_P f_2 \, d\mu$.
    \item $\displaystyle \oint_{P_\vee Q} f \, d\mu \geq \displaystyle \oint_{P} f \, d\mu \lor \displaystyle \oint_{Q} f \, d\mu$
    \item $\displaystyle \oint_{P_\land Q} f \, d\mu \leq \displaystyle \oint_{P} f \, d\mu \land \displaystyle \oint_{Q} f \, d\mu$
\end{enumerate}

\section{Fuzzy Integrals defined on fuzzy measures.}
The following theorems establishes a transform from a fuzzy integral $\displaystyle\int_{P} f d\mu = I_f$ defined on a fuzzy measure space to another fuzzy integral $I_g$ defined on the Lebesgue measure space and, defines a fuzzy measure using the fuzzy integral of a given measurable function as can be seen in [13]
\begin{theorem}[][13]
    For any $P \in F$
    $$
    \int_{P}f d\mu = \int \mu(P \cap \mathcal{F}_{\lambda}) d\nu
    $$
    where $\mathcal{F}_{\lambda} = \{a|f(a) \geq \lambda \}$ and $\nu$ is the Lebesgue measure.
\end{theorem}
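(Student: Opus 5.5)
We read the right-hand side in the sense of the fuzzy integral defined above: the fuzzy integral, over the measurable space $([0,\infty],\mathcal{B})$ with the Lebesgue measure $\nu$, of the function
$$g(\lambda)=\mu(P\cap\mathcal{F}_\lambda),\qquad \lambda\in[0,\infty].$$
By the definition of the fuzzy integral, the claim then becomes the identity
$$\sup_{\lambda\in[0,\infty]}\bigl[\lambda\wedge g(\lambda)\bigr]=\sup_{\alpha\in[0,\infty]}\bigl[\alpha\wedge \nu(G_\alpha)\bigr],\qquad G_\alpha=\{\lambda\mid g(\lambda)\ge\alpha\}.$$
We prove it by comparing the two suprema and establishing each inequality separately.

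First we record the structure of $g$. By the Lemma stating that $\mathcal{F}_\lambda$ is non-increasing in $\lambda$, the sets $P\cap\mathcal{F}_\lambda$ decrease as $\lambda$ increases. By monotonicity of the fuzzy measure $\mu$, $g$ is therefore non-increasing on $[0,\infty]$. Consequently each level set $G_\alpha$ is an initial segment of $[0,\infty]$: if $\lambda\in G_\alpha$ and $\lambda'\le\lambda$, then $\lambda'\in G_\alpha$. Hence $G_\alpha$ is either empty or an interval of the form $[0,\beta_\alpha)$ or $[0,\beta_\alpha]$, where $\beta_\alpha=\sup G_\alpha$. In particular $G_\alpha$ is Borel, so $g$ is measurable and the right-hand integral makes sense, and $\nu(G_\alpha)=\beta_\alpha$ (taking $\beta_\alpha=0$ when $G_\alpha=\emptyset$).

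For the inequality ``$\le$'', fix $\lambda$ and put $\alpha=\lambda\wedge g(\lambda)$. Then $g(\lambda)\ge\alpha$, so $\lambda\in G_\alpha$. By the initial-segment property, $[0,\lambda)\subset G_\alpha$, and so $\nu(G_\alpha)\ge\lambda$. It follows that $\alpha\wedge\nu(G_\alpha)\ge\alpha\wedge\lambda=\lambda\wedge g(\lambda)$. Taking the supremum over $\lambda$ gives the inequality.

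For ``$\ge$'', fix $\alpha$ and assume $\beta_\alpha>0$; otherwise the term $\alpha\wedge\nu(G_\alpha)$ is $0$ and there is nothing to prove. For every $\lambda<\beta_\alpha$ we have $\lambda\in G_\alpha$, so $g(\lambda)\ge\alpha$ and hence $\lambda\wedge g(\lambda)\ge\lambda\wedge\alpha$. Letting $\lambda\uparrow\beta_\alpha$ gives
$$\sup_\lambda\bigl[\lambda\wedge g(\lambda)\bigr]\ge\beta_\alpha\wedge\alpha=\nu(G_\alpha)\wedge\alpha.$$
Taking the supremum over $\alpha$ finishes the proof.

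The main obstacle is the bookkeeping at the boundary. We must handle whether the endpoint $\beta_\alpha$ belongs to $G_\alpha$, which is why we work only with $\lambda<\beta_\alpha$ and pass to a limit. We must also treat the cases $\beta_\alpha=\infty$ and $g(\lambda)=\infty$ in the extended arithmetic of $[0,\infty]$, with the conventions $\infty\wedge c=c$ and $\nu([0,\infty))=\infty$. No continuity property of $\mu$ is needed, only monotonicity. This is what makes the argument valid for an arbitrary fuzzy measure.
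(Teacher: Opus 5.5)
Your proof is correct. Reading the right-hand side as the fuzzy integral of $g(\lambda)=\mu(P\cap\mathcal{F}_\lambda)$ with respect to $\nu$ is the intended meaning, and both inequalities, including the endpoint and $\infty$ cases, go through using only monotonicity of $\mu$. The paper gives no proof of its own and simply cites [13]; your argument (monotonicity of $g$, then the level sets $G_\alpha$ being initial intervals with $\nu(G_\alpha)=\sup G_\alpha$, then a two-sided comparison of the suprema) is essentially the standard proof of this transformation theorem.
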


\begin{theorem}[][13]
Given the fuzzy measure space $(X, F, \mu)$ and a measurable function $f \in \mathcal{F}$. $H$ is denoted as set function such that:
\[
H(P) = \int_{P} f \, d\mu
\]
$\forall \ P \in F$ is a lower semicontinuous fuzzy measure on $(X,F)$. Hence if $\mu$ is finite, then $H$ is a finite fuzzy measure on $(X,F)$
 \end{theorem}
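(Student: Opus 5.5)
The plan is to check the axioms of the fuzzy measure definition in Section 3 directly for $H(P)=\int_P f\,d\mu=\sup_{\lambda\in[0,\infty]}[\lambda\wedge\mu(P\cap\mathcal{F}_\lambda)]$. I will use two facts about $\mu$: it is monotone, and it is continuous from below and, when finite, from above. I will also use the monotonicity of the cut sets $\mathcal{F}_\lambda$ in $\lambda$ (the Lemma in Section 5).

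\textbf{Null set and monotonicity.} Since $\mu(\emptyset)=0$, rule 1 of the fuzzy integral gives $H(\emptyset)=0$. If $P\subset Q$, then $P\cap\mathcal{F}_\lambda\subset Q\cap\mathcal{F}_\lambda$ for every $\lambda$. Monotonicity of $\mu$ then gives $\lambda\wedge\mu(P\cap\mathcal{F}_\lambda)\le\lambda\wedge\mu(Q\cap\mathcal{F}_\lambda)$ termwise. Taking suprema yields $H(P)\le H(Q)$.

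\textbf{Continuity from below.} Let $P_1\subset P_2\subset\cdots$ with $P=\bigcup_n P_n\in F$. By monotonicity, $H(P_n)$ is nondecreasing and bounded by $H(P)$. For each fixed $\lambda$, the sets $P_n\cap\mathcal{F}_\lambda$ increase to $P\cap\mathcal{F}_\lambda$. Continuity from below of $\mu$ gives $\mu(P_n\cap\mathcal{F}_\lambda)\uparrow\mu(P\cap\mathcal{F}_\lambda)$. Because $t\mapsto\lambda\wedge t$ is monotone and continuous, $\lambda\wedge\mu(P_n\cap\mathcal{F}_\lambda)\uparrow\lambda\wedge\mu(P\cap\mathcal{F}_\lambda)$. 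Two suprema can always be interchanged, so
\[
\lim_n H(P_n)=\sup_n\sup_\lambda[\lambda\wedge\mu(P_n\cap\mathcal{F}_\lambda)]=\sup_\lambda\sup_n[\lambda\wedge\mu(P_n\cap\mathcal{F}_\lambda)]=H(P).
\]
So $H$ satisfies (1), (2) and (3), and is a lower semicontinuous fuzzy measure.

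\textbf{Finite case: finiteness.} Now assume $\mu$ is finite. Every term satisfies $\lambda\wedge\mu(P\cap\mathcal{F}_\lambda)\le\mu(X)$, so $H(P)\le\mu(X)<\infty$. It remains to prove continuity from above (4), which is the main obstacle, since an infimum cannot be passed through a supremum.

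\textbf{Finite case: continuity from above.} Let $P_1\supseteq P_2\supseteq\cdots$ with $P=\bigcap_n P_n\in F$. Then $H(P_n)$ decreases to some limit $c\ge H(P)$. Suppose, for contradiction, that $c>H(P)$, and choose $\alpha$ with $H(P)<\alpha<c$.
\begin{itemize}
\item For each $n$ we have $H(P_n)>\alpha$. Hence there is $\lambda_n$ with $\lambda_n\wedge\mu(P_n\cap\mathcal{F}_{\lambda_n})>\alpha$, which forces $\lambda_n>\alpha$.
\item Since $\mathcal{F}_\lambda$ is non-increasing in $\lambda$, we get $\mathcal{F}_{\lambda_n}\subset\mathcal{F}_\alpha$. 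Therefore $\mu(P_n\cap\mathcal{F}_\alpha)\ge\mu(P_n\cap\mathcal{F}_{\lambda_n})>\alpha$ for every $n$.
\item The sets $P_n\cap\mathcal{F}_\alpha$ decrease to $P\cap\mathcal{F}_\alpha$, and $\mu(P_1\cap\mathcal{F}_\alpha)\le\mu(X)<\infty$. Continuity from above of $\mu$ therefore gives $\mu(P\cap\mathcal{F}_\alpha)\ge\alpha$.
\item Consequently $H(P)\ge\alpha\wedge\mu(P\cap\mathcal{F}_\alpha)=\alpha$, contradicting $\alpha>H(P)$.
\end{itemize}
Hence $\lim_n H(P_n)=H(P)$, and $H$ satisfies (1)–(4) with finite values. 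So $H$ is a finite fuzzy measure on $(X,F)$.
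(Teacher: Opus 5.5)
Your proof is correct and complete. Interchanging the two suprema gives continuity from below, and choosing $\alpha$ strictly between $H(P)$ and $\lim_n H(P_n)$ lets you apply continuity from above of $\mu$ to the fixed cuts $P_n\cap\mathcal{F}_\alpha$, which is exactly where finiteness of $\mu$ is needed. The paper gives no proof of its own for this statement and simply defers to [13]; your direct verification of conditions (1)--(4) through $H(P)=\sup_{\lambda}[\lambda\wedge\mu(P\cap\mathcal{F}_\lambda)]$ is, to my recollection, essentially the standard argument given there.
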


\section{Fuzzy Topological Group and Fuzzy Lie Group}

\begin{definition}[Fuzzy $T_1$][9]\\
    A fuzzy topological space is called a fuzzy $T_1$ space if every fuzzy point is a closed fuzzy set.
\end{definition}

\begin{definition}[][9]\\
    Let $E_1, E_2$ be two fuzzy topological vector spaces and let $\phi$ be a mapping from $E$ into $E_2$. Let $0(t)$ denote any function of a real variable $t$ such that
    $$
    \lim_{t \to 0} 0 (t)/t = 0
    $$
    The mapping $\phi$ is said to be tangent to $0$ if given a neighborhood $W$ of $0_{\delta}, 0 < \delta \leq 1$, in $E_2$ there exists a neighbourhood $V$ of $O_{\gamma},$ for every $\gamma$, $0 < \gamma < \delta$ in $E_1$ such that\\
    $$
    \phi [tV] \subset 0(t)W
    $$
    for some function  $0(t)$
 \end{definition}

\begin{definition}[Fuzzy differentiability][9] \\
Let $E_{1}$ and $E_{2}$ be two fuzzy topological vector spaces, each endowed with a $T_1$ fuzzy topology. Let $f: E_{1} \to E_{2}$ be a fuzzy continuous mapping. Then $f$ is said to be \emph{fuzzy differentiable} at a point $x \in E_{1}$ if there exists a linear fuzzy continuous mapping $U$ of $E_{1}$ into $E_{2}$ such that
\[
f(x+y)=f(x)+U(y)+\phi(y), \quad y \in E_{1},
\]
where $\phi$ is tangent to $0$. The mapping $U$ is called the \emph{fuzzy derivative} of $f$ at $x$. The fuzzy derivative of $f$ at $x$ is denoted by $f'(x)$; it is an element of $L(E_{1},E_{2})$, the set of all linear fuzzy continuous mappings of $E_{1}$ into $E_{2}$. The mapping $f$ is \emph{fuzzy differentiable} if it is fuzzy differentiable at every point of $E_{1}$.
\end{definition}

\begin{definition}[Fuzzy diffeomorphism][9]\\
Let $E_{1}, E_{2}$ be fuzzy topological vector spaces. A bijection $f$ of $E_{1}$ onto $E_{2}$ is said to be a \emph{fuzzy diffeomorphism of class $C^1$} if and only if $f$ and its inverse $f^{-1}$ are fuzzy differentiable, and $f'$ and $(f^{-1})'$ are fuzzy continuous.
\end{definition}

\begin{definition}[Fuzzy atlas][3]\\
Let $X$ be a set. A \emph{fuzzy atlas $\mathcal{A}$ of class $C^1$} fuzzy atlas on $X$ is a collection of pairs $(A_j, \phi_j)$, $j \in J$, which satisfies the following conditions:
\begin{enumerate}
\item Each $A_j$ is a fuzzy set in $X$ and $\sup\{\cup{A_j}(x)\}=1$ for all $x \in X$.
\item Each $\phi_j$ is a bijection defined on the support of $A_j$, $\{x\in X: \cup{A_j}(x)>0\}$, which maps $A_j$ onto an open fuzzy set $\phi_j[A_j]$ in some fuzzy topological vector space $E_j$, and for each $i$ in the index set, $\phi_j[A_j \cap A_i]$ is an open fuzzy set in $E_j$.
\item The mapping $\phi_i \circ \phi_j^{-1}$, which maps $\phi_j[A_j \cap A_i]$ onto $\phi_i[A_j \cap A_i]$, is a $C^1$ fuzzy diffeomorphism for each pair of indices $j,i$.
\end{enumerate}
Each pair $(A_j,\phi_j)$ is called a \emph{fuzzy chart} of the fuzzy atlas. If a point $x \in X$ lies in the support of $A_j$, then $(A_j,\phi_j)$ is said to be a fuzzy chart at $x$.
\end{definition}
Let $(X,\tau)$ be a fuzzy topological space. Suppose there exist an open fuzzy set $A$ in $X$ and a fuzzy continuous bijective mapping $\phi$ defined on the support of $A$ and mapping $A$ onto an open fuzzy set $V$ in some fuzzy topological vector space $E$. Then $(A,\phi)$ is said to be \emph{compatible} with the $C^1$ atlas $\{(A_j,\phi_j)\}$ if each mapping $\phi_j \circ \phi^{-1}$ of $\phi[A \cap A_j]$ onto $\phi_j[A \cap A_j]$ is a fuzzy diffeomorphism of class $C^1$.\\

Two $C^1$ fuzzy atlases are compatible if each fuzzy chart of one atlas is compatible with each fuzzy chart of the other atlas. Also, the relation of compatibility between $C^1$ fuzzy atlases is an equivalence relation.

\begin{definition}[$C^1$ fuzzy manifold][3]\\
    An equivalence class of $C^1$ fuzzy atlases on $X$ is said to define a \emph{$C^1$ fuzzy manifold} on $X$.
\end{definition}

\begin{definition}[Fuzzy differentiable mapping between manifolds][9]\\
Let $X, Y$ be fuzzy manifolds and let $f: X \to Y$ be a mapping. Then $f$ is said to be \emph{fuzzy differentiable} at a point $x \in X$ if there exists a fuzzy chart $(U,\phi)$ at $x \in X$ and a fuzzy chart $(V,\psi)$ at $f(x) \in Y$ such that the mapping $\psi \circ f \circ \phi^{-1}$, which maps $\phi[U \cap f^{-1}[V]]$ into $\psi[V]$, is fuzzy differentiable at $\phi(x)$.

The mapping $f$ is \emph{fuzzy differentiable} if it is fuzzy differentiable at every point of $X$; it is a $C^1$ fuzzy diffeomorphism if $\psi \circ f \circ \phi^{-1}$ is a $C^1$ fuzzy diffeomorphism.

The concept of a fuzzy Lie group depends on the basic concepts in fuzzy topology, $C^1$-fuzzy manifolds, and fuzzy differentiable functions between two $C^1$-fuzzy manifolds, all of which have been discussed above.
\end{definition}

\begin{definition} [Fuzzy Group] [3]\\

    $G$ is called a fuzzy group if a mapping
\[
\mathfrak{Y} : G \to [0, 1] \quad \text{satisfies the following conditions:}
\]
\[
\mathfrak{Y}(x y) \geq \min \{\mathfrak{Y}(x), \mathfrak{Y}(y)\} \quad \forall \ x,y \in G.
\]
\[
\mathfrak{Y}(x^{-1}) \geq \mathfrak{Y}(x).
\]
\end{definition}

\begin{definition} [Fuzzy Topological Group] [3] \\
Given a fuzzy group $G$. A fuzzy topological group is defined as the pair $(G, \tau)$ where $\tau$ is a fuzzy topology defined on $G$. If the maps:
\[
q : G \times G \to G \ | \ (h_1, h_2) \mapsto h_1 \cdot h_2
\]
and
\[
i : G \to G \ | \ h \mapsto h^{-1}
\]
are fuzzy continuous.
\end{definition}

Then the pair $(G, \tau)$ is called a fuzzy topological Group.
\begin{definition} [Fuzzy Lie Group] [3] \\

    A fuzzy Lie group $G$, is a $C^1$- fuzzy manifold $G$ such that the mapping
    \[
    p: G  \times  G  \to G \ | \ (k_1, k_2) \mapsto k_1 \cdot k_2
    \]
    \[
    i: G \to G \ | \ k \mapsto k^{-1}
    \]
    are fuzzy differentiable
\end{definition}

\begin{definition} (Locally Compact Fuzzy Lie Group)\\
Given a fuzzy topological group $G$. If $G$ possesses the properties of locally compact and Hausdorff. Then $G$ is said to be a locally compact fuzzy Lie group.
\end{definition}

\section{Fuzzy Haar Measure}

\begin{definition} [Fuzzy Haar Measure] [NEW]

A fuzzy measure $\mu$ defined on a locally compact fuzzy Lie group $\mathfrak{G}_f$ is called a fuzzy Haar measure denoted by $\mu_f$ if $\mu$ satisfies the following conditions
\begin{itemize}
    \item[1]  $\mu(\emptyset) = 0 \quad where \quad \emptyset \in \tau.$
    \item[2]  For any $P, \ Q \subset \mathfrak{G}_f$ such that $P \subset Q$, Then $\mu(P) \leq \mu(Q)$ \quad (Monotonicity)
    \item[3]  $\lim_{n \to \infty} \mu(P_n) = \mu(P)$ for any monotone sequence $P_n \to P$ \quad (Continuity)
    \item[4]  $\mu(gP) = \mu(P)$ for all $g \in \mathfrak{G}_f$ \quad (Left invariance)
\end{itemize}
\end{definition}

\begin{Example}
    Given a function $f$ defined on $\mathfrak{G}_f$, it is non-negative.\\
    Let
\[
\mathfrak{G}_f = (-\infty, \infty) \quad and
\]
\[
\mu(P) = \displaystyle\sup_{a \in P} f(a)
\]
$\forall \quad P \in \mathcal{P} (\mathfrak{G}_f) $.

We show that $\mu$ satisfy the set conditions of a fuzzy Haar measure. To see this, we shall show that \\
    \begin{enumerate}
    \item \[\mu(P) = \sup_{a \in P} f(a).\] Let $P = \emptyset $, $$\mu(\emptyset) = \sup_{x\in \emptyset} f(x).$$ But $x\notin \emptyset$ $$\mu(\emptyset)=\sup_{x \in \emptyset} f(x) = 0 $$
    \item $\forall \ P \in \mathcal{P} (\mathfrak{G}_f), \ Q \in \mathcal{P} (\mathfrak{G}_f) \quad \text{and} \quad P \subset Q,$ \\

    We have
          \[
          \mu(P) = \sup_{a \in P} f(a)
          \]
          \[
          \leq \sup_{b \in Q} f(b)
          \]
          \[
          = \mu(Q)
          \]
          Hence $\mu(P) \leq \mu(Q)$.

    \item
          $\displaystyle\lim_{n} \mu(P_n) = \mu(P)$ \\ We have
    $$\begin{array}{rcl}
    \displaystyle\lim_{n} \mu(P_n - P) &=& 0, \\
    \displaystyle\lim_{n} \mu(P_n) &=& \displaystyle\lim_{n} \mu(P \lor (P_n - P))\\
    &=& \mu(P).
    \end{array} $$
    Thus \\
    $$ \begin{array}{rcl}
    \displaystyle\lim_{n} \displaystyle\sup_{a_n \in P_n} f(a_n) &=& \displaystyle\lim_{n} \displaystyle\sup_{a_n \in P_n} (f(a) \lor (f(a_n) - f(a)) ) \\
    &=& \displaystyle\sup_{a \in P} f(a).
    \end{array}$$

    Hence $\mu$ is continuous from below.

    However,

$$ \begin{array}{rcl}
            \displaystyle\lim_{n} \mu(P_n - P) &=& 0 \\
            \displaystyle\lim_{n} \mu(P_n - P) &=& \displaystyle\lim_{n} \mu(P \land (P_n -P) = \mu(P) \\
            \displaystyle\lim_{n} \displaystyle\sup_{a_n \in P_n} f(a_n) &=& \displaystyle\sup_{a \in P} f(a_n) \\
            \displaystyle\lim_{n } \displaystyle\sup_{a_n \in P_n} (f(a_n) - f(a)) &=& 0 \\
            \displaystyle\lim_{n} \displaystyle\sup_{a_n \in P_n} (f(a_n)) &=& \displaystyle\lim_{n} \displaystyle\sup_{a_n \in P_n} ((f(a) \land (f(a_n) - f(a))) \\
            &\neq& \displaystyle\sup_{x \in A} f(a)
          \end{array} $$

          This shows that the continuity from above is not satisfied.

    \item
    $$ \begin{array}{rcl}
    \mu (gP - P) &=& 0 \\
    \displaystyle\sup_{a  \in P} f(ga) &=& \displaystyle\sup_{a \in P} (f((g_1,g_2,g_3)(a_1, a_2, a_3))) \\
    &=& \displaystyle\sup_{a \in P} f (g_1a_1,g_2a_2,g_3a_3) \\
    &=& \displaystyle\sup_{a_1 \in P} f(g_1a_1)\displaystyle\sup_{a_2 \in P} f(g_2a_2)\displaystyle\sup_{a_3 \in P} f(g_3a_3) \\
    &=& \displaystyle\sup_{a_1 \in P} f(a_1) \displaystyle\sup_{a_2 \in P} f(a_2) \displaystyle\sup_{a_3 \in P} f(a_3) \\
    &=& \displaystyle\sup_{a \in P} f(a).
    \end{array}$$
\end{enumerate}
Hence $\mu_f$ is a fuzzy Haar measure.
\end{Example}

\begin{theorem}
A complete fuzzy measure is a fuzzy Haar measure if and only if it is translation invariant over a fuzzy Lie group \(\mathfrak{G}_f\).
\end{theorem}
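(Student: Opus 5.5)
The plan is to argue directly from the Definition of Fuzzy Haar Measure, treating "translation invariant" as left invariance, $\mu(gP)=\mu(P)$ for all $g\in\mathfrak{G}_f$ and all measurable $P$. We also use the preceding Theorem that every fuzzy measure is complete.

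\emph{Forward direction.} Let $\mu$ be a complete fuzzy measure which is a fuzzy Haar measure $\mu_f$ on $\mathfrak{G}_f$. Condition 4 of the Definition of Fuzzy Haar Measure is exactly left translation invariance, so this direction is immediate. The only point to check is that $gP$ is again measurable whenever $P$ is. For this we note that left multiplication $L_g:h\mapsto gh$ is fuzzy differentiable, since $p$ is fuzzy differentiable in the Definition of Fuzzy Lie Group. It is therefore fuzzy continuous, and its inverse is $L_{g^{-1}}$, so $L_g$ is a fuzzy homeomorphism. Hence $L_g$ carries the $\sigma$-algebra $F$ generated by the fuzzy open sets onto itself, using the generated-$\sigma$-ring theorems $F(\mathcal{J})=F(\mathfrak{R}(\mathcal{J}))$.

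\emph{Converse direction.} Let $\mu$ be a complete fuzzy measure on $(\mathfrak{G}_f,F)$ that is translation invariant. We verify conditions 1--4 of the Definition in turn.
\begin{enumerate}
\item $\mu(\emptyset)=0$ is axiom (1) of the Definition of Fuzzy Measure.
\item Monotonicity is axiom (2) of the same definition.
\item Continuity along monotone sequences comes from axioms (3) and (4): continuity from below for increasing sequences and continuity from above for decreasing ones. For decreasing sequences this requires $\mu(P_1)<\infty$.
\item Left invariance is the hypothesis.
\end{enumerate}

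The expected obstacle is condition 3 for decreasing sequences of infinite measure, since a fuzzy measure only guarantees continuity from above under the finiteness assumption. I would first restrict to sets contained in compact subsets, which is legitimate because $\mathfrak{G}_f$ is locally compact. For general $P_n\downarrow P$, I would then use completeness, via the Theorem that every fuzzy measure is complete and the uniform auto-continuity estimate in its proof. The idea is to approximate each $P_n$ by $P_n\cap K$ for compact $K$ and pass to the limit, using null-additivity to discard sets of measure zero. If this approximation fails, I would interpret condition 3 in the same semicontinuous sense as the Definition of Fuzzy Measure, that is, with the finiteness proviso, so that it follows verbatim. That would complete the equivalence.
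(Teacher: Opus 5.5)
\paragraph{Where the proposal breaks.} Your forward direction is fine, and it needs no measurability check. Condition 4 of the definition already presupposes that $\mu(gP)$ is defined. Your check also rests on taking $F$ to be generated by fuzzy open sets, which are membership functions rather than subsets, and $F(\mathcal{J})=F(\mathfrak{R}(\mathcal{J}))$ does not give translation invariance of $F$.

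The genuine gap is condition 3 in the converse, and the compact-approximation plan cannot close it: with condition 3 read literally, the converse is false.
\begin{itemize}
\item Take $\mathfrak{G}_f=(-\infty,\infty)$, as in the paper's own example, and let $\mu$ be Lebesgue measure. It is a fuzzy measure in the paper's sense, since continuity from above is demanded only when $\mu(P_1)<\infty$. It is translation invariant and complete, and, being additive, it is also null-additive and uniformly auto-continuous.
\item Yet $P_n=[n,\infty)\downarrow\emptyset$ has $\mu(P_n)=\infty$ for all $n$, so $\lim_n\mu(P_n)\neq\mu(\emptyset)=0$.
\item Your approximation fails exactly at an interchange of limits. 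For every compact $K$ we have $\lim_n\mu(P_n\cap K)=0$, while $\lim_n\sup_K\mu(P_n\cap K)=\infty$.
\item Completeness, null-additivity and uniform auto-continuity cannot justify that interchange, because this example has all three. They are not properties of a general fuzzy measure in any case; the paper's proof that every fuzzy measure is complete invokes uniform auto-continuity without assuming it.
\item Local compactness does not let you restrict condition 3 to relatively compact sets, since the definition quantifies over all monotone sequences.
\end{itemize}
In fact the literal condition 3 is degenerate. Let $G$ be a non-compact $\sigma$-compact group with compact $K_n\uparrow G$. Any relatively compact $A$ has a translate $gA\subset G\setminus K_n$, so left invariance gives $\mu(A)\le\mu(G\setminus K_n)\to 0$. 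Then $\mu(G)=\lim_n\mu(K_n)=0$, so the only literal fuzzy Haar measure on $\mathbb{R}$ is $\mu\equiv 0$.

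\paragraph{What your fallback actually amounts to.} The only way to finish is to read condition 3 with the proviso $\mu(P_1)<\infty$ for decreasing sequences. That is a change of definition, not a step of proof, and you should state it up front as such. With it, conditions 1--3 are verbatim axioms (1)--(4) of a fuzzy measure and condition 4 is the hypothesis. The theorem then becomes a tautology in which ``complete'' plays no role.

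Reading ``translation invariant'' as left invariance is likewise a choice you must make explicit. Under the relaxed definition, the left Haar measure of a non-unimodular group such as the $ax+b$ group is a fuzzy Haar measure that is not right invariant. A two-sided reading would therefore break the forward direction.

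The paper's proof offers no way around any of this. It argues with right translates $T*b$ and a covering $T\subset\bigcup_i D_i$. Its converse merely re-asserts $\mu_f(T*b)=\mu_f(T)$ without checking conditions 1--3 at all.
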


\begin{proof}
Suppose \(\mu\) is a complete fuzzy measure. If \(\mu\) is a fuzzy Haar measure denoted as \(\mu_f\), then for any \(T \in \mathfrak{G}_f\) and \(B \in F\),
\[
\mu_f(T * b) = \mu_f(T), \quad \forall \ b \in B.
\]
But there exists \(D_i \subset F\) such that
\[
T \subset \displaystyle\bigcup_i D_i, \quad i = 1, 2, \dots
\]
Also for any \(D_i\), \(i=1,2,\dots\), there exist \(T * b \subset \displaystyle\bigcup_i D_i\), \(i=1,2,\dots\), and we have $$T \subset \bigcup_i (D_i - b)$$. Since \(\limsup \mu(D_i - b) = m\) where \(m \in [0,\infty)\). However \(\lim \mu(D_i) = \lim \mu(D_i - b)\). Hence \(\limsup \mu(D_i) = m\). It follows that \(D_i\), \(i=1,2,\dots\) covers \(T * b\) and we have $$\mu_f(T * b) = \mu_f(T)$$.

Conversely, if \(\mu_f\) is translation invariant, there exists a covering \(D_i \subset F\) such that for all \(T \subset \mathfrak{G}_f\), \(T \subset \displaystyle\bigcup_i D_i\), \(i=1,2,\dots\). However for any $b \in B$, $T * b \subset \displaystyle\bigcup_i D_i.$ Hence $$\mu_f(T * b) = \mu_f(T).$$
\end{proof}

\begin{definition} [Unimodular]
    A fuzzy Lie group $\mathfrak{G}_f$ is said to be unimodular iff the left and right Haar measures coincide.
\end{definition}

\section{Fuzzy Haar Integral}
\begin{definition} [Fuzzy Haar Integral]

Given a locally compact fuzzy Lie group $\mathfrak{G}_f$. If $P \in F$ and $f \in \mathcal{F} $. Then $I$ is said to be a fuzzy Haar integral of $f$ over $\mathfrak{G_f}$ with respect to the fuzzy Haar measure $\mu_f$ denoted as $I_{\mathfrak{G_f}} (f)$  = $\int_{\mathfrak{G}_f} f \, d\mu_f$ if the following conditions are satisfied:
\begin{itemize}
    \item[1]   \[
\int_{P} f(gP) \, d\mu_f = \sup_{\lambda \in [0,\infty)} [f(g) \lor [\lambda \land \mu_f(P \cap \mathcal{F}_{\lambda}) = \int_{P} f(P)
\]
\item[2]
\[
\int_{P} f(P) \, d\mu_f \geq 0
\]
\end{itemize}
$ \forall \quad g \in \mathfrak{G_f}$.
\end{definition}

\begin{Example}

 If $\mathfrak{G}_f = \{x,y,z\}$ and $F = \mathcal{P}(\mathfrak{G}_f) $, the measure $\mu_f$ as defined as
\[
\mu_f(P) =
\begin{cases}
|P| & \text{if } P \neq \{x,y\} \\
1 & \text{if } P = \{x,y\}
\end{cases}
\]
$\forall \quad P \in F$. Let
\[
f(a) =
\begin{cases}
\frac{1}{2} & \text{if } a = x \\
1 & \text{if } a = y \\
2 & \text{if } a = z \quad \quad \forall \ a \in P
\end{cases}
\]
Show that $\mu_f$ is a fuzzy Haar Measure.
\end{Example}

\textbf{Solution}:
\begin{enumerate}
    \item  From
\[
\int_{P} f(gP) \, d\mu_f = \sup_{\lambda \in [0,\infty]} [f(g) \lor [\lambda \wedge \mu_f (P \cap \mathcal{F}_{\lambda})] = \int_{P} f(P) \, d\mu_f \ ,
\]
If $f(g)=\frac{|g|}{3}$ and $g = x$.\\ \\
$f(1)=\frac{1}{3}$. Hence
\[
\int_{P} f(gP) \, d\mu_f = \sup_{\lambda \in [\frac{1}{2},2]} [f(g) \lor [[\frac{1}{2} \land \mu_f(\{x\})] \lor [1 \land \mu_f (\{x,y\})] \lor [2 \land \mu_f (\{G_f\})] ]]
\]
\[
= \sup \ [\frac{1}{3} \lor [\frac{1}{2} \lor1 \lor 2]]
\]
\[
=\sup \ [\frac{1}{3} \lor 2]
\]
\[
= \frac{1}{3}
\]
\item  Clearly
\[
\int_{P} f(P) \, d\mu_f > 0. \
\]
\end{enumerate}

\begin{theorem}

 Given a locally compact fuzzy Lie group $G_f$, such that \( P, Q\in  F \), and \( f \in \mathcal{F} \). Let \( C_c(G_f) \) be continuous fuzzy function on \( G_f \) with compact support and \( g \in C_c(G_f) \). Then the fuzzy Haar Integral
\[
\int_{P} f \, d\mu_f
\]
is unique up to a multiplicative constant. That is,
\[
\int_{P} f \, d\mu_f = \gamma \int_{Q} f \, d\mu_f
\]
$\forall \quad \gamma > 0$.
\end{theorem}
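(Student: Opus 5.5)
The literal statement, with ``$\forall\,\gamma>0$'' and $f$ arbitrary, cannot hold: take $P=Q$ and $\int_P f\,d\mu_f\in(0,\infty)$. So I would first fix the intended reading. I read it as the fuzzy analogue of classical Haar uniqueness. If $\mu_f$ and $\nu_f$ are two fuzzy Haar measures on $G_f$, both nonzero and finite on a fixed compact set $K$, then there is one constant $\gamma>0$, depending only on the pair of measures, such that
\[
\int_P f\,d\nu_f=\gamma\int_P f\,d\mu_f
\]
for all $P\in F$ and all $f\in\mathcal F$ built from $C_c(G_f)$. The displayed identity with $P$ and $Q$ would then be the special case in which the right-hand integral is computed with respect to a second Haar measure. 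I would state this reformulation explicitly at the start.

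\textbf{Step 1: reduce to a statement about measures.} The Sugeno integral is a sup–min expression, not a linear functional, so the Riesz–Fubini route of Alfsen [2] does not apply directly. Instead I would invoke the transform theorem of Section 6:
\[
\int_P f\,d\mu=\int \mu(P\cap\mathcal F_\lambda)\,d\nu(\lambda),
\]
where $\nu$ is Lebesgue measure in $\lambda$. This shows that $\int_P f\,d\mu_f$ is determined by the distribution function $\lambda\mapsto\mu_f(P\cap\mathcal F_\lambda)$. Hence it suffices to prove $\nu_f(A)=\gamma\,\mu_f(A)$ for every measurable $A$, in particular for every $A=P\cap\mathcal F_\lambda$. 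Substituting then gives $\int_P f\,d\nu_f=\gamma\int_P f\,d\mu_f$, using rule 5 ($\int_P m\,d\mu=m\,\mu(P)$) and the monotonicity rules to move $\gamma$ outside. Since rule 5 is exactly linear in $\mu$, I would check it through the Lebesgue-integral form of the transform theorem rather than through the $\lambda\wedge\mu$ formula.

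\textbf{Step 2: prove proportionality of the measures.} I would set
\[
\gamma=\frac{\nu_f(K)}{\mu_f(K)},
\]
which is positive and finite by assumption. For any measurable $A$, I would compare the two measures through left translates $gK$, using left invariance (condition 4 of the fuzzy Haar measure definition) and the covering argument in the proof of Theorem 8.3. Cover $A$ by translates $g_iK$, pass to increasing unions, and apply continuity (condition 3). Taking infima over coverings gives $\nu_f(A)\le\gamma\,\mu_f(A)$. The reverse inequality follows by the symmetric argument, using completeness (Theorem 3.23) to handle null sets.

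\textbf{Main obstacle.} Step 2 is where everything depends on additivity. Without some additivity, monotonicity, continuity and invariance do not force proportionality: Example 8.2's $\sup f$ measure shows many non-proportional invariant fuzzy measures can exist. I would therefore try to add a hypothesis, either null-additivity or uniform auto-continuity (Section 3), or better, $\sigma$-additivity on the $\sigma$-algebra generated by compact sets. Under that hypothesis the covering counts (Haar's ``$(A:K)$'' ratios) become subadditive, and Haar's classical limiting argument goes through. If no such hypothesis is allowed, I expect the uniqueness claim to fail, and I would record a counterexample instead.
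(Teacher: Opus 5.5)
Your observation that the literal statement (with ``$\forall\,\gamma>0$'') is false is correct. Your two-measure reading also matches how the paper's proof treats $\int_P$ and $\int_Q$, namely as two left-invariant Haar integrals. The paper then runs the classical approximate-identity argument (symmetrized $f_n$, ratios $\gamma_n$, Fubini, uniform continuity of $g$). That argument needs linearity of the integral in $f$ and a Fubini theorem, and the sup--min integral has neither, so the paper does not fill the gap below.

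Your route is different, but Step~1 fails, and no hypothesis added in Step~2 can repair it: the Sugeno integral is not homogeneous in the measure. Substituting $\lambda=\gamma s$ in the definition gives
\[
\int_P f\,d(\gamma\mu)=\sup_{s\in[0,\infty]}\bigl[\gamma s\wedge\gamma\,\mu(P\cap\{f\ge\gamma s\})\bigr]=\gamma\int_P \frac{f}{\gamma}\,d\mu ,
\]
which is not $\gamma\int_P f\,d\mu$. For a constant $c$ one has $\int_P c\,d\mu=c\wedge\mu(P)$. Take $\gamma>1$, $\nu=\gamma\mu$ and $0<\mu(P)<\infty$:
\begin{itemize}
\item if $c\le\mu(P)$, then $\int_P c\,d\nu=\int_P c\,d\mu$;
\item if $c\ge\gamma\mu(P)$, then $\int_P c\,d\nu=\gamma\int_P c\,d\mu$.
\end{itemize}
So even two classical, $\sigma$-additive, exactly proportional Haar measures on a compact group (where constants lie in $C_c$) do not have fuzzy integrals related by any single constant. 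Your reformulated theorem is therefore false.

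The tools you use to pull $\gamma$ outside do not work. Rule~5 as printed is a misprint: the correct value is $c\wedge\mu(P)$, not $c\cdot\mu(P)$. The right-hand side of the transform theorem in Section~6 is a fuzzy integral with respect to Lebesgue measure, not a Lebesgue integral. If you read it as a Lebesgue integral, you get the Choquet integral $\int_0^\infty\mu(P\cap\mathcal{F}_\lambda)\,d\lambda$, which is a different functional. For example, with $f=\chi_P$ and $\mu(P)=2$ it gives $2$, while the fuzzy integral is $1$.

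Step~2 has its own gap, which you partly anticipate. Monotonicity, continuity and left invariance do not force proportionality. For normalized Haar measure $m$ on a nontrivial compact group, $m^2$ satisfies all four conditions of the fuzzy Haar measure definition and is not a multiple of $m$. This is a better witness than the $\sup f$ example, which is invariant only for constant $f$ and fails continuity from above, as the paper itself concedes.

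Even with $\sigma$-additivity, covering $A$ by $n$ translates of a fixed $K$ only gives $\nu_f(A)\le n\gamma\,\mu_f(K)$. It gives no matching lower bound on $\mu_f(A)$ in terms of $n\,\mu_f(K)$. That is why Haar's argument needs shrinking neighbourhoods and a limiting procedure. Once you assume additivity, you are simply quoting classical uniqueness, and by the first paragraph that still does not give proportional fuzzy integrals.

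The consistent outcome of your analysis is the fallback you mention: a counterexample (the constant-function one above suffices), not a proof.
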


\textbf{Proof}

Let \( f_n \in F \). We consider
\[
(f(P^{-1})) = f(P).
\]
Also replace \( f_n \) by the function
\[
P^{-1} \to f_n(P)f_n(P^{-1})
\]

If the integrals \(\int_{P}\) and \(\int_{Q}\) are left invariant fuzzy Haar integrals on \(G_f\), we take
\[
\gamma_n = \left(\int_{P}f_n\right)\left(\int_{Q}f_n\right)^{-1}
\]

Clearly \(\gamma_{n} > 0\). for all n, we may assume that either \(\gamma_{n} \to \gamma\) or \(\gamma_{n}^{-1} \to \gamma.\)\\
for some \(\gamma > 0\)
Applying fubini's theorem, we have \\
\[
\int_P \otimes \int_Q  \quad on \quad G_f \quad \times \quad G_f \quad \hbox{together}. \
we \ have \ for \ any \ g \in C_c(G_f) \ and \ for \ every \ function \ f_n
\]
$$\begin{array}{rcl}
      \displaystyle\int_Q \displaystyle\int_P g(P)f_n(Q) \, dP \, dQ  &=& \sup_{\lambda \in [0, \infty]} [[\lambda \land \mu_f (P \cap \mathcal{F_{\lambda} })] \land [\lambda \land \mu_f (Q \cap \mathcal{F_{\lambda}})]] \\ \\
      &=& \sup_{\lambda \in [0, \infty]} [[\lambda \land \mu_f (Q \cap \mathcal{F_{\lambda} })] \land [\lambda \land \mu_f (P \cap \mathcal{F_{\lambda}})]] \\ \\
      &=& \sup_{\lambda \in [0, \infty]} [[f_{n}(P)^{-1} \land [ \lambda \land \mu_f (Q \cap \mathcal{F_{\lambda} })] \land [\lambda \land \mu_f (P \cap \mathcal{F_{\lambda}})]] \\ \\
      &=& \sup_{\lambda \in [0, \infty]} [[\lambda \land \mu_f (P \cap \mathcal{F_{\lambda} })] \land [f(P)^{-1}\lambda \land \mu_f (P \cap \mathcal{F_{\lambda}})]] \\ \\
      &=& \sup_{\lambda \in [0, \infty]} [[g(Q) \land [\lambda \land \mu_f (P \cap \mathcal{F_{\lambda} })] \land [\lambda \land \mu_f (P \cap \mathcal{F_{\lambda}})]] \\ \\
      &=& \sup_{\lambda \in [0, \infty]} [g(Q) \wedge [\lambda \wedge \mu_f(P \wedge \mathcal{F}{\lambda})] \wedge [\lambda \wedge \mu_f(P \wedge \mathcal{F}{\lambda})]] \\ \\
      &=& \displaystyle\int_{Q} \displaystyle\int_{P} g(QP) f_n(P) \, dP \, dQ
\end{array}$$

For all \(\epsilon > 0\), \(\exists\) a Compact neighborhood \(W\), such that \(\forall \, P \in W\)

\[
\int_{Q} |g(QP) - g(Q)| \, dQ < \epsilon
\]

However, the support of \(f_n\) is continued in \(W\). We can establish

$$\begin{array}{rcl}
\left|\displaystyle\int_{P} g(P) \, dP - \gamma \displaystyle\int_{Q} f(Q) \, dQ\right|
&=&\lim_n ( \displaystyle\int_{Q} (f_n)^{-1} \left | \displaystyle\int_{P} \displaystyle\int_{Q} g(P) f_n(Q) - f_n(P) g(Q) \, dQ \, dP \right| \\ \\
&=& \lim_n ( \displaystyle\int_{Q} (f_n)^{-1} \left| \displaystyle\int_{P} \displaystyle\int_{Q} (g(QP)) - g(Q) f_n(P) \, dQ \, dP \right| \\ \\
&\leq& \lim_n \sup \displaystyle\int_{Q} (f_n)^{-1} \displaystyle\int_{P} \epsilon f_n \\ \\
&=& \epsilon \gamma
\end{array}$$
Since \(\epsilon\) and \(g\) were arbitrary we say
\[
\int_{P} f \ {d\mu_f} = \gamma \int_{Q} f \ {d\mu_f}
\]
\\ \\

\begin{theorem}
    A locally compact fuzzy Lie group \(G_f\) is said to be unimodular if and only if $\gamma \ \equiv \ 1$
\end{theorem}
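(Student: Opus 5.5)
The plan is to read $\gamma$ as the modular function of $G_f$ and reduce the statement to the definition of unimodularity in the paper: left and right fuzzy Haar measures coincide. First I would fix a left invariant fuzzy Haar measure $\mu_f$ on $G_f$. For each $g \in G_f$, the right translate $P \mapsto \mu_f(Pg)$ is again a fuzzy measure: monotonicity, $\mu_f(\emptyset)=0$ and continuity along monotone sequences are preserved, because $P \mapsto Pg$ is a bijection that preserves inclusions and monotone limits. It is also left invariant, since $\mu_f(hPg)=\mu_f(Pg)$.

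Next I would apply the previous theorem (uniqueness of the fuzzy Haar integral up to a multiplicative constant) to the two left invariant integrals $f \mapsto \int f\,d\mu_f$ and $f \mapsto \int f(\cdot\, g^{-1})\,d\mu_f$. This gives a constant $\gamma=\gamma(g)>0$ with $\mu_f(Pg)=\gamma(g)\,\mu_f(P)$ for all $P\in F$. Taking $f=\chi_P$ and using rule 5 of the fuzzy integral, $\int_P m\,d\mu = m\,\mu(P)$, passes from integrals to measures. I would also record that $\gamma(gh)=\gamma(g)\gamma(h)$, by translating twice, so $\gamma$ is a homomorphism into the positive reals.

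For ($\Leftarrow$), assume $\gamma\equiv 1$. Then $\mu_f(Pg)=\mu_f(P)$ for all $g$, so $\mu_f$ is also right invariant. By the theorem characterizing fuzzy Haar measures as translation invariant complete fuzzy measures, $\mu_f$ is then a right fuzzy Haar measure. By uniqueness, applied now to right invariant integrals, the right Haar measure is a multiple of $\mu_f$, and the normalisation can be taken equal. Hence left and right measures coincide, and $G_f$ is unimodular.

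For ($\Rightarrow$), assume the left and right Haar measures coincide, so $\mu_f$ is right invariant. Then $\mu_f(Pg)=\mu_f(P)$, and therefore $\gamma(g)\mu_f(P)=\mu_f(P)$ for all $P$. Choose $P$ with $0<\mu_f(P)<\infty$, for instance a compact neighbourhood of the identity given by local compactness. Then $\gamma(g)=1$ for every $g$.

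The main obstacle is the step producing $\gamma(g)$ with $\mu_f(Pg)=\gamma(g)\mu_f(P)$. The fuzzy integral $\sup_\lambda[\lambda\wedge\mu(P\cap\mathcal F_\lambda)]$ is not linear, so a proportionality of integrals does not immediately give a proportionality of measures. I would handle this with characteristic functions scaled by $m$, using rule 5 and level sets, so that equality of integrals for all $m$ forces equality of the measures. I would also need a set of finite nonzero measure, which is where local compactness enters.
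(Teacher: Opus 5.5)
You correctly flag the step producing $\gamma(g)$ with $\mu_f(Pg)=\gamma(g)\mu_f(P)$ as the crux, but the repair you propose cannot work. Rule 5 as printed does not match the paper's integral (Definition 5.2). For $f=m\chi_P$ the level sets $\mathcal{F}_t$ are $X$, $P$ or $\emptyset$, so
\[
\int m\chi_P\,d\mu=\sup_{0<t\le m}\,[\,t\wedge\mu(P)\,]=m\wedge\mu(P),
\]
not $m\,\mu(P)$. Now test a proportionality $\int f(\cdot\,g^{-1})\,d\mu_f=\gamma\int f\,d\mu_f$ on $f=m\chi_P$ with $\mu_f(P)>0$. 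It reads $m\wedge\mu_f(Pg)=\gamma\,(m\wedge\mu_f(P))$. Sufficiently small $m>0$ forces $\gamma=1$, and then large $m$ forces $\mu_f(Pg)=\mu_f(P)$. So scaled characteristic functions can never give $\mu_f(Pg)=\gamma(g)\mu_f(P)$ with $\gamma(g)\neq1$. The ``modular function'' your argument produces would be identically $1$ on every group, and the equivalence would carry no information.

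The underlying obstruction is that the uniqueness theorem, in the form you invoke it, is false for this integral. Let $\eta$ be a left Haar measure on the $ax+b$ group, with classical modular function $\Delta$, so that $\eta(Pg)=\Delta(g)\eta(P)$ and $\Delta(g)\neq1$ for some $g$; $\eta$ is a left-invariant fuzzy measure. Your two left-invariant functionals take the values $m\wedge\Delta(g)\eta(P)$ and $m\wedge\eta(P)$ on $m\chi_P$. Their ratio is $1$ for small $m$ and $\Delta(g)$ for large $m$, so they are not proportional.

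Proportionality fails even at the level of set functions. The set function $\mu(P)=\eta(P)+\eta(P)^2$ is also a left-invariant fuzzy measure (monotone, continuous from below, and from above when finite). Yet $\mu(Pg)=\Delta(g)\eta(P)+\Delta(g)^2\eta(P)^2$ is a fixed multiple of $\mu(P)$ for all $P$ only if $\Delta(g)=1$. The classical uniqueness proof rests on linearity of the integral and on Fubini, and the integral of Definition 5.2 is not linear. Without uniqueness, neither $\gamma(g)$ nor ``the'' left and right Haar measures in the definition of unimodularity are well defined. Your ($\Leftarrow$) step (``the right Haar measure is a multiple of $\mu_f$'') fails for the same reason.

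The paper's own proof does not get around this: it cites the same uniqueness theorem in one line for ($\Leftarrow$) and leaves ($\Rightarrow$) unfinished. A correct version needs a setting where uniqueness genuinely holds, e.g.\ $\sigma$-additive Haar measures with the Lebesgue integral, where your modular-function argument is the standard proof. Note also that the definition of a fuzzy Haar measure contains no Radon-type condition, so local compactness alone does not supply a set with $0<\mu_f(P)<\infty$.
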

\[
Proof
\]
If $\gamma \ \equiv \ 1$, then \(G_f\) is unimodular, since \\
\[
\int_{P} f(ag) = \int_{P} f(a) = \gamma \int_{Q} f(a)
\]
by definition 7.11 and theorem 9.3 \\
Conversely, if \(G_f\) is unimodular. Let $I_{G_f}$ be a fuzzy Haar integral on \(G_f\). Let $Q \in F$. Then $I_{G_f} (f) \in (0,\infty)$ , for any $f \in \mathcal{F}$


\begin{thebibliography}{99}

\bibitem{akram2018} Akram, M. (2018) \textit{Fuzzy Lie Algebras}. Springer Verlag, Berlin. \url{https://doi.org/10.1007/978-981-13-3221-0}

\bibitem{alfsen1963} Alfsen, E.M. (1963), A simplified constructive proof of the existence and uniqueness of Haar measure,
\textit{Mathematica Scandinavica} 12, 106--116.

\bibitem{egwe2024} Egwe, M.E. and Sangodele, S.S. (2024) ``Spherical Functions on Fuzzy Lie Group, \textit{Advances in Pure Mathematics}, 14, 185--195. \url{https://doi.org/10.4236/apm.2024.144010}

\bibitem{haar1933} Haar, A. (1933), ``Der Massbegriff in der Theorie der kontinuierlichen Gruppen, \textit{Annals of Mathematics}, 2 34 (1), 147--167. JSTOR 1968346.

\bibitem{honda1999} Honda, A. and Okazaki, Y. (1999), Invariance of Fuzzy Measure, \textit{Bull. Kyushu Inst. Tech. Pure Appl. Math} 46, 1--8.

\bibitem{kacprzyk2017}
Kacprzyk, J., Szmidt, E., Zadrozny, S., Atanassov, K.T., Krawczak, M. (eds.). (2017).
\textit{Advances in Fuzzy Logic and Technology 2017: Proceedings of EUSFLAT-2017 and IWIFSGN'2017},
Warsaw, Poland, September 11-15, 2017. Advances in Intelligent Systems and Computing, vol. 642. Springer.

\bibitem{katsaras1977} Katsaras, A., D. B., L. (1977), \textit{Fuzzy vector spaces and fuzzy topological vector spaces}.
J. Math. Anal. Appl., 58, 135--146.

\bibitem{loomis1953} Loomis, L.H. (1953), \textit{An Introduction to Abstract Harmonic Analysis}, D. Van Nostrand Company Canada, Ltd, pp. 117--120.

\bibitem{nadjafikhah2009} Nadjafikhah, M., Bakhshandeh-Chamazkoti, R. (2009), \textit{Fuzzy Lie group},
arXiv preprint arXiv:0908.0254.


\bibitem{rosenfeld1971} Rosenfeld, A. (1971), Fuzzy groups, \textit{Journal of Mathematical Analysis and Applications} 35, 512--517.

\bibitem{sugeno1974} Sugeno, M. (1974), \textit{Theory of Fuzzy Integrals and its Applications}. Ph.D. dissertation, Tokyo Institute of Technology.

\bibitem{sugeno1977} Sugeno, M. (1977), ``Fuzzy measures and fuzzy integrals: A survey, In: Gupta, Saridis and Gaines [1977], 89--102.

\bibitem{wang1992} Wang, Z. (1992), \textit{Fuzzy Measure Theory}, Plenum Press, New York.

\bibitem{zadeh1996} Zadeh, L.A., Klir, G.J. and Yuan, B. (eds.) (1996),\textit{Fuzzy Sets, Fuzzy Logic, and Fuzzy Systems: Selected Papers},
World Scientific.

\bibitem{zhang1991} Zhang, G. (1991), \textit{Fuzzy Continuous Function and Its Properties}, Fuzzy Sets and Systems, 43(2), 159-171.

\end{thebibliography}
\end{document}